\documentclass[reqno, 12pt]{article}

\pdfoutput=1

\usepackage{enumerate}
\usepackage{latexsym}
\usepackage[centertags]{amsmath}
\usepackage{amsfonts}
\usepackage{amssymb}
\usepackage{amsthm}
\usepackage{mathtools}
\usepackage{newlfont}
\usepackage{graphics}
\usepackage{color}
\usepackage{float}
\usepackage{diagbox}
\usepackage{tocloft}
\usepackage{titlesec}
\usepackage{booktabs}
\usepackage{extpfeil}
\usepackage{centernot}
\usepackage[pagebackref]{hyperref}
\usepackage[linesnumbered,ruled,vlined]{algorithm2e}
\usepackage{url}
\usepackage{hyperref}
\usepackage{longtable}
\usepackage{rotating}
\usepackage{multirow}
\usepackage{extarrows}
\usepackage[sort,compress,numbers]{natbib}
\usepackage[utf8]{inputenc}

\newtheorem{thm}{Theorem}[section]
\newtheorem{cor}[thm]{Corollary}
\newtheorem{lem}[thm]{Lemma}
\newtheorem{prop}[thm]{Proposition}

\theoremstyle{mydefinition}
\newtheorem{dfn}[thm]{Definition}
\theoremstyle{myremark}
\newtheorem{rem}[thm]{Remark}

\newtheorem{prob}[thm]{Open Problem}

\allowdisplaybreaks[4]

\SetKwInput{KwInput}{Input}                
\SetKwInput{KwOutput}{Output}              

\def\Z{\mathbb{Z}}

\title{Magic Positivity for the Ehrhart Polynomials of Partial Permutohedra}

\author{Feihu Liu$^{\color{blue} \dag}$ and Zihao Zhang$^{\color{blue} \S}$
\\[2mm]
{\small $^{\color{blue} \dag}$ Center for Combinatorics, LPMC}\\[-0.8ex]
{\small Nankai University, Tianjin 300071, P.R.~China}\\
{\small $^{\color{blue} \S}$ School of Mathematics and Statistics}\\[-0.8ex]
{\small Beijing Institute of Technology, Beijing 102400, P.R.~China}\\
{\small {\color{blue} $^\dag$} Email address: liufeihu7476@163.com}\\
{\small {\color{blue} $^\S$} Email address: zihao-zhang@foxmail.com}\\
}

\date{\today}

\begin{document}

\maketitle

\begin{abstract}
For positive integers \(m,n\), the partial permutohedron $\mathcal{P}(m,n)$ is a lattice polytope constructed as the convex hull of vectors in $\{0, 1, \dots, n\}^m$ that have distinct non-zero entries. We prove that for $n \ge m-1$, the Ehrhart polynomial of $\mathcal{P}(m,n)$ is magic positive except for the single case \((m,n)=(2,1)\). In particular, the Ehrhart polynomial of the parking function polytope (integrally equivalent to $\mathcal{P}(m,m-1)$) is magic positive for $m \ge 3$. For $n<m-1$, we discuss the magic positivity of the Ehrhart polynomial of $\mathcal{P}(m,n)$ for $n=1,2,3$. There exist infinitely many counterexamples with $n<m-1$ showing that the Ehrhart polynomial of $\mathcal{P}(m,n)$ is not magic positive. This partially resolves an open problem proposed by Ferroni and Higashitani.
\end{abstract}

\noindent
\begin{small}
\emph{2020 Mathematics subject classification}: Primary 05A15;  Secondary 52B20; 52B05.
\end{small}

\noindent
\begin{small}
\emph{Keywords}: Lattice polytope; Partial permutohedron; Parking function polytope; Ehrhart polynomial; Magic positivity.
\end{small}


\section{Introduction}

Let $\mathcal{P}$ be a \emph{lattice $d$-polytope}, i.e., a convex polytope in $\mathbb{R}^N$ whose vertices are elements of $\mathbb{Z}^N$ and whose affine span has dimension $d\leq N$.
The \emph{lattice point enumerator}
\[
i(\mathcal{P},t) = |t\mathcal{P} \cap \mathbb{Z}^N|, \quad t=1,2,\ldots,
\]
counts the number of integer points in the $t$-th dilation $t\mathcal{P} = \{ t\alpha : \alpha \in \mathcal{P} \}$ of $\mathcal{P}$.

Ehrhart~\cite{Ehrhart62} proved that the function $i(\mathcal{P},t)$ is a polynomial in $t$ of degree $d$ with constant term $1$.
Now $i(\mathcal{P},t)$ is called the \emph{Ehrhart polynomial} of $\mathcal{P}$.
One can find a polynomial $h_{\mathcal{P}}^{*}(x)$ of degree at most $d$ and satisfying
$$1+\sum_{t\geq 1}i(\mathcal{P},t)x^t=\frac{h_{\mathcal{P}}^{*}(x)}{(1-x)^{d+1}}.$$
The polynomial $h_{\mathcal{P}}^*(x)$ is often called as the \emph{$h^*$-polynomial} of $\mathcal{P}$.
Stanley~\cite{Stanleyh-polynomial} shows that the coefficients of $h^*$-polynomial are nonnegative integers.
For further background and knowledge on polytopes, we refer to several excellent books~\cite{BeckRobins}, \cite[Chapter 4]{RP.Stanley}, and \cite{Ziegler}.

A lattice polytope $\mathcal{P}$ is said to be \emph{Ehrhart positive} if all coefficients of $i(\mathcal{P},t)$ are non-negative.
For a comprehensive introduction to Ehrhart positivity, we refer the classic survey by Liu~\cite{FuLiu19}.
A stronger property than Ehrhart positivity is \emph{magic positivity}, defined as follows.
\begin{dfn}[Magic Positivity]
A lattice polytope $\mathcal{P}$ of dimension $d$ with Ehrhart polynomial $i(\mathcal{P},t)$ is said to be \emph{magic positive} if the polynomial can be expressed in the basis $\{t^i(t+1)^{d-i}\}_{i=0}^d$ with non-negative coefficients. That is,
\begin{align*}
i(\mathcal{P},t) = \sum_{i=0}^d a_i t^i(t+1)^{d-i},
\end{align*}
where $a_i \ge 0$ for all $0 \le i \le d$.
\end{dfn}

Clearly, the magic positivity of a lattice polytope implies the Ehrhart positivity of the polytope.
Another reason for studying the magic positivity of a lattice polytope $\mathcal{P}$ is that, by the result of Br\"and\'en \cite{Branden06}, it implies that the $h^*$-polynomial of $\mathcal{P}$ is real-rooted.

The main object of study in this paper is partial permutohedron. 
For positive integers \(m,n\), the \emph{partial permutohedron} \(\mathcal{P}(m,n)\subset \mathbb{R}^m\) is the convex hull of the vectors in \(\{0,1,\ldots,n\}^m\) whose nonzero entries are distinct. 
It is an \(m\)-dimensional lattice polytope.
The partial permutohedron was first introduced and studied by Heuer and Striker \cite{Heuer21,Heuer22}, and later further studied by Behrend et al \cite{Behedrn2023,Behedrn2025}.
Further such lattice polytopes were also studied by Black and Sanyal \cite{Black24} in the context of monotone path polytopes of polymatroids and by Hanada, Lentfer, and Vindas-Mel\'endez \cite{Hanada23} in the context of generalized parking function polytopes.
Recently, Behrend \cite{Behrend2024} gave a characterization for the Ehrhart polynomial of $\mathcal{P}(m,n)$ with $n \geq m-1$.

Ferroni and Higashitani mentioned the following open problem in their classic survey \cite{Ferroni23}.
\begin{prob}{\em \cite[Problem 4.23]{Ferroni23}}\label{Open-problem}
Characterize or classify all partial permutohedra that are magic positive.
\end{prob}

We note that partial permutohedra $\mathcal{P}(m,n)$ for $n\geq m-1$ are examples of $Y$-generalized permutohedra (under the unimodularly equivalent). 
The $Y$-generalized permutohedra is defined by Postnikov in \cite{PostnikovIMRN}.
This is a family of Ehrhart positive lattice polytopes.
Therefore, partial permutohedra $\mathcal{P}(m,n)$ for $n\geq m-1$ are Ehrhart positive.
Avila, Ferroni, and Morales \cite{Avila26} addressed the magic positivity problem for $Y$-generalized permutohedra under the assumption that the input parameters are sufficiently large.

The main contribution of this paper is to prove the following theorem.
\begin{thm}\label{thm:main}
Let \(m,n\in \mathbb{Z}_{>0}\) satisfy \(n\ge m-1\). Then the Ehrhart polynomial of the partial permutohedron $\mathcal{P}(m,n)$ is magic positive, except when \((m,n)=(2,1)\). In the exceptional case \(\mathcal{P}(2,1)\), the Ehrhart polynomial is not magic positive.
\end{thm}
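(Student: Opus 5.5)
The plan is to start from Behrend's explicit description of $i(\mathcal{P}(m,n),t)$ for $n\ge m-1$ and turn magic positivity into a coefficient-positivity statement. Under the substitution $t=x/(1-x)$ we have $t^i(t+1)^{d-i}=x^i(1-x)^{-d}$. Hence, with $d=m$,
\[
i(\mathcal{P}(m,n),t)=\sum_{i=0}^m a_i t^i(t+1)^{m-i}
\quad\Longleftrightarrow\quad
(1-x)^m\, i\!\left(\mathcal{P}(m,n),\tfrac{x}{1-x}\right)=\sum_{i=0}^m a_i x^i .
\]
So magic positivity is equivalent to the polynomial $A_{m,n}(x):=(1-x)^m i(\mathcal{P}(m,n),x/(1-x))$ having nonnegative coefficients. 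Two standard facts will be used throughout. First, a product of magic positive polynomials is magic positive, with the degrees adding. Second, a nonnegative combination of magic positive polynomials of the same degree $d$ is magic positive. Since the basis is only defined for a fixed degree, I will multiply lower-degree terms by powers of $(t+1)$, which preserves magic positivity.

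\textbf{Step 1: structure.} For $n\ge m-1$, $\mathcal{P}(m,n)$ is the polymatroid with rank function $f(S)=\sum_{j=0}^{|S|-1}(n-j)$. Applying Möbius inversion, I will write $f(S)=\sum_{I\cap S\neq\emptyset}y_I$, where $y_I$ depends only on $|I|$. The result should be $y_{\{i\}}=n-m+1\ge 0$, $y_I=1$ for $|I|=2$, and $y_I=0$ for larger $I$. This exhibits
\[
\mathcal{P}(m,n)\cong (n-m+1)\,[0,1]^m\text{-type part}+\Pi_m,
\]
that is, a Minkowski sum of $(n-m+1)$ times the coordinate simplices with the permutohedron. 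In this form Postnikov's formula for $Y$-generalized permutohedra (equivalently, Behrend's formula) writes $i(\mathcal{P}(m,n),t)$ as a sum over $k$ of the number of points from the permutohedral part on a $k$-dimensional face. Each such count is multiplied by a product of factors of the form $\binom{(n-m+1)t+\cdot}{\cdot}$ coming from the simplex directions.

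\textbf{Step 2: termwise analysis.} I will group the formula into summands that are products of three kinds of factors:
\begin{itemize}
\item Ehrhart polynomials of lower-dimensional permutohedra (forest counts, $\sum_F t^{e(F)}$-type expressions);
\item linear factors such as $(t+1)$, $t$, and $(n-m+1)t+c$;
\item rising-factorial-type binomials.
\end{itemize}
The permutohedron's Ehrhart polynomial is a sum over forests of $t^{\#\text{edges}}$. After padding with $(t+1)$-powers this is magic positive, because $t^j(t+1)^{d-j}$ lies in the basis. A linear factor $at+b$ with $a\ge b\ge0$ is magic positive, since $at+b=b(t+1)+(a-b)t$; the same holds for $a<b$ with $b\le a+\dots$ after pairing. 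The delicate factors are those like $(t+2)/2$, i.e. $at+b$ with $b>a$. These appear exactly when $n-m+1$ is small.

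\textbf{Step 3: the main obstacle.} The hard part is the regime $n-m+1\in\{0,1\}$, which includes the parking function polytope. There the simplex factors are of the form $\binom{t+k}{k}$, and these are not magic positive on their own: $\binom{t+2}{2}$ is precisely the failing case $(2,1)$. My first attempt will be to pair each such factor with a surplus factor $t$ available from the permutohedral part, showing that $t\binom{t+k}{k}\cdot(\ldots)$ or the sum over $k$ is magic positive. Concretely, I will compute $A_{m,n}(x)$ as an explicit sum, since $\binom{t+k}{k}$ becomes $(1-x)^{-k}$ times Eulerian-type polynomials, and then prove coefficient nonnegativity by an induction on $m$ that uses a recurrence for $A_{m,n}$ in $m$. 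The general case $n\ge m$ then follows from the fact that the simplex factors carry the coefficient $n-m+1\ge1$.

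\textbf{Step 4: the exception and small cases.} Small cases such as $m\le 3$ will be checked directly. For $(m,n)=(2,1)$, the polytope is the standard triangle, so
\[
i(\mathcal{P}(2,1),t)=\frac{(t+1)(t+2)}{2}=(t+1)^2-\tfrac12\,t(t+1).
\]
The coefficient of $t(t+1)$ is negative, so $\mathcal{P}(2,1)$ is not magic positive.
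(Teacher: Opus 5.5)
Your outline has a genuine gap. The actual content of the theorem, nonnegativity of every magic coefficient for all $n\ge m-1$, is never argued. Step 3 defers it to ``an induction on $m$ that uses a recurrence for $A_{m,n}$ in $m$'', but no recurrence is given, and nothing indicates why one would preserve nonnegativity. The claim that $n\ge m$ then ``follows'' because the cube factors carry $n-m+1\ge 1$ is unsupported. At $n=m$ that coefficient is $1$, which is the hard regime you yourself identified. For larger $n$ you give no mechanism for moving from one value of $n$ to the next.

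The termwise strategy of Step 2 also rests on a false principle. You cannot ``multiply lower-degree terms by powers of $(t+1)$'', since that changes the polynomial. Moreover, magic positivity in degree $e$ does not survive in degree $m>e$: a summand $p(t)$ of degree $e$ contributes $(1-y)^{m-e}\mathsf{M}_e p(y)$ to $\mathsf{M}_m$, e.g.\ $1=(t+1)-t$ in degree $1$. Likewise $\sum_F t^{e(F)}$ transforms to $\sum_F y^{e(F)}(1-y)^{d-e(F)}$, not to $\sum_F y^{e(F)}$. Your own expansion contains such lower-degree pieces (the empty forest alone contributes a constant). So nonnegativity of $\mathsf{M}_m E_{m,n}$ can only come from cancellation between summands, and the proposal never addresses that.

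Step 1 is also off. On ground set $[m]$, the values $y_{\{i\}}=n-m+1$, $y_{\{i,j\}}=1$ give a translate of the $(m-1)$-dimensional permutohedron, which is only the top facet. The full-dimensional polytope is $(n-m+1)[0,1]^m+\sum_{i<j}\mathrm{conv}(0,e_i,e_j)$.

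The paper supplies exactly the missing ingredients. It works with Behrend's generating-function formula rather than a combinatorial expansion. The magic transform gives each coefficient in closed form as $\frac{m!}{r!}B_{i,r}(n)$, with $r=m-i$ and $B_{i,r}(n)=[z^i](1-z/4)^r\sqrt{1-z}\,e^{(n-\frac12)z+z^2/4}$. The identity $\partial_n B_{i,r}=B_{i-1,r}$ propagates nonnegativity from the boundary $n=i+r-1$ (the parking-function case) to all larger $n$, by induction on $i$. At the boundary, Lagrange inversion for $T=ue^{T}$ yields $B_{i,r}(i+r-1)=[u^i]R(u)^rC(u)$. Here $R$ has nonnegative coefficients, $q^{(r)}_k=[u^k]R^r$ is nondecreasing in $k$ for $r\ge 2$, and $C=1-u+P$ with $P$ coefficientwise nonnegative. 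Hence $[u^i]R^rC=q^{(r)}_i-q^{(r)}_{i-1}+[u^i]R^rP\ge 0$, except at $(i,r)=(1,1)$, which only matters for $(m,n)=(2,1)$. Your Step 4 computation for $\mathcal{P}(2,1)$ is correct and agrees with the paper's $\mu_{2,1,1}=-\frac12$.
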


A direct computation yields $h_{\mathcal{P}(2,1)}^{*}(x)=1$. Therefore, we obtain the following corollary.
\begin{cor}
Let \(m,n\in \mathbb{Z}_{>0}\) satisfy \(n\ge m-1\). Then the $h^*$-polynomial of the partial permutohedron $\mathcal{P}(m,n)$ is real-rooted.
\end{cor}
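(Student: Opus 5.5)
The plan is to combine Theorem~\ref{thm:main} with Br\"and\'en's result \cite{Branden06}, treating the single exceptional case $(m,n)=(2,1)$ separately. Fix $m,n\in\mathbb{Z}_{>0}$ with $n\ge m-1$ and split into two cases.

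\emph{Generic case} $(m,n)\neq(2,1)$. Theorem~\ref{thm:main} gives the expansion $i(\mathcal{P}(m,n),t)=\sum_{i=0}^m a_i t^i(t+1)^{m-i}$ with all $a_i\ge 0$, where we use that $\dim\mathcal{P}(m,n)=m$. Br\"and\'en's theorem \cite{Branden06}, as recalled in the introduction, says that if the Ehrhart polynomial of a $d$-dimensional lattice polytope has nonnegative coefficients in the basis $\{t^i(t+1)^{d-i}\}_{i=0}^d$, then its $h^*$-polynomial is real-rooted. The mechanism is that each basis element $t^i(t+1)^{d-i}$ transforms, under $i(\mathcal{P},t)\mapsto h^*_{\mathcal{P}}(x)$, into a polynomial from a family with a common interlacing property, so any nonnegative combination of them stays real-rooted. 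Applying this with $d=m$ gives real-rootedness of $h^*_{\mathcal{P}(m,n)}(x)$. The only hypothesis to check is that the expansion uses the full dimension $d=m$, which holds since $\mathcal{P}(m,n)$ is $m$-dimensional.

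\emph{Exceptional case} $(m,n)=(2,1)$. Here magic positivity fails, so Br\"and\'en's theorem does not apply, and we compute $h^*$ directly. The polytope $\mathcal{P}(2,1)$ is the convex hull of $(0,0),(1,0),(0,1)$, since $(1,1)$ has repeated nonzero entries. This is a unimodular triangle, so $i(\mathcal{P}(2,1),t)=\binom{t+2}{2}$. Then $\sum_{t\ge0}\binom{t+2}{2}x^t=1/(1-x)^3$, so $h^*_{\mathcal{P}(2,1)}(x)=1$. A nonzero constant polynomial has no roots, hence is real-rooted by convention.

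There is no real obstacle once Theorem~\ref{thm:main} is available. The only points needing care are the conventions: in Br\"and\'en's theorem the basis degree must equal $\dim\mathcal{P}(m,n)=m$, and a nonzero constant counts as real-rooted.
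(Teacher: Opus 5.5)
Your proposal is correct and follows the paper's argument. Theorem~\ref{thm:main} together with Br\"and\'en's result handles every $(m,n)\neq(2,1)$, and the exceptional case is settled by the direct computation $h^*_{\mathcal{P}(2,1)}(x)=1$; you additionally justify this by identifying $\mathcal{P}(2,1)$ as the unimodular triangle with Ehrhart polynomial $\binom{t+2}{2}$, which the paper leaves implicit.
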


The partial permutohedron $\mathcal{P}(m,m-1)$, which is integrally equivalent to the well-known parking function polytope $P_m$.
A \emph{parking function} of length $m$ is an $m$-vector of positive integers whose nondecreasing rearrangement $a_1\leq a_2\leq \cdots \leq a_m$ satisfies $a_i\leq i$ for $i=1,\ldots,m$.
The \emph{parking function polytope} $P_m$ is the convex hull of all parking functions of length $m$.
For related work on the $P_m$, see \cite{Amanbayeva,Hanada23,Stanley2020AMM,Stong2022}.

\begin{cor}
For integer $m \ge 3$, the Ehrhart polynomial of the parking function polytope $P_m$ is magic positive.
\end{cor}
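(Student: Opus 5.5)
The corollary is a direct specialization of Theorem~\ref{thm:main}, so I do not plan a separate computation. I would argue in three steps.

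\textbf{Step 1: reduce to the Ehrhart polynomial of $\mathcal{P}(m,m-1)$.} As recalled above, $P_m$ is integrally equivalent to $\mathcal{P}(m,m-1)$. For concreteness, one can take the affine map $x\mapsto (m,\dots,m)-x$, possibly composed with a coordinate permutation, which is a unimodular affine transformation of $\mathbb{Z}^m$. Such a map sends the lattice points of $tP_m$ bijectively onto the lattice points of $t\,\mathcal{P}(m,m-1)$ for every $t\ge1$. Hence $i(P_m,t)=i(\mathcal{P}(m,m-1),t)$. Both polytopes have dimension $m$, so both expansions use the same basis $\{t^i(t+1)^{m-i}\}_{i=0}^m$. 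Magic positivity is therefore a property of the common Ehrhart polynomial and transfers between the two polytopes.

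\textbf{Step 2: check the hypotheses of Theorem~\ref{thm:main}.} Set $n=m-1$. The condition $n\ge m-1$ holds with equality. The only excluded case of Theorem~\ref{thm:main} is $(m,n)=(2,1)$, which corresponds to $m=2$. For every $m\ge3$ we have $(m,m-1)\neq(2,1)$, so Theorem~\ref{thm:main} gives that $i(\mathcal{P}(m,m-1),t)$ is magic positive.

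\textbf{Step 3: conclude.} Combining Steps~1 and~2, $i(P_m,t)$ is magic positive for all $m\ge3$.

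\textbf{Main obstacle.} The only point needing care is the integral equivalence in Step~1, namely confirming that the chosen map really carries the vertex set of $P_m$ onto that of $\mathcal{P}(m,m-1)$. Since the paper states this equivalence, I would cite it rather than reprove it. For completeness one can also note the boundary cases. The case $m=2$ is genuinely excluded, since it is the non-magic-positive case $\mathcal{P}(2,1)$ of Theorem~\ref{thm:main}. The case $m=1$ is degenerate: $P_1$ is a single point, which is why the corollary starts at $m=3$.
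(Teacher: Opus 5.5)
Your argument is the paper's own: specialize Theorem~\ref{thm:main} to $n=m-1$, note that $(m,m-1)\neq(2,1)$ once $m\ge 3$, and transfer magic positivity through the integral equivalence $P_m\cong\mathcal{P}(m,m-1)$, which the paper states without proof. The logical structure is correct.

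\textbf{The map in Step~1 is wrong.} The map you offer ``for concreteness'', $x\mapsto(m,\dots,m)-x$ (optionally followed by a coordinate permutation), does not carry $P_m$ onto $\mathcal{P}(m,m-1)$. The point $(1,\dots,1)$ is a parking function, indeed a vertex of $P_m$, and this map sends it to $(m-1,\dots,m-1)$. That point has coordinate sum $m(m-1)$. The maximum coordinate sum on $\mathcal{P}(m,m-1)$ is $\binom{m}{2}$, attained at the permutations of $(0,1,\dots,m-1)$. Permuting coordinates does not change the sum, so no composite of this kind lands in $\mathcal{P}(m,m-1)$.

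\textbf{The correct map} is the translation $x\mapsto x-(1,\dots,1)$. It sends the vertices of $P_m$, namely the permutations of $(1,\dots,1,k+1,\dots,m)$ with $k$ ones ($1\le k\le m$), onto the permutations of $(0,\dots,0,k,\dots,m-1)$. These are exactly the vertices of $\mathcal{P}(m,m-1)$. On the $t$-th dilate you should use $x\mapsto x-t(1,\dots,1)$, not the undilated map, to get the lattice-point bijection between $tP_m$ and $t\,\mathcal{P}(m,m-1)$.

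Since you ultimately rely on the equivalence stated in the paper rather than on your map, the conclusion stands. The ``for concreteness'' sentence should be replaced with the translation.
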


For the cases $n<m-1$, we prove the following small-\(n\) classification.

\begin{thm}\label{thm:main-Two}
In the range \(n<m-1\), the following statements hold.
\begin{enumerate}
\item For \(n=1\), the Ehrhart polynomial $i(\mathcal{P}(m,1),t)$ is not magic positive for every \(m\ge3\).

\item For \(n=2\), the Ehrhart polynomial $i(\mathcal{P}(m,2),t)$ is not magic positive for every \(m\ge4\).

\item For \(n=3\), the Ehrhart polynomial $i(\mathcal{P}(m,3),t)$ is magic positive exactly for \(m=5\) and \(m=6\).  It is not magic positive for every \(m\ge7\).
\end{enumerate}
\end{thm}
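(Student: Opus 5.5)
To prove Theorem~\ref{thm:main-Two}, we first need an explicit formula for $i(\mathcal{P}(m,n),t)$ when $n\in\{1,2,3\}$ and $m>n+1$. For such small $n$ a vector in $\{0,\dots,n\}^m$ has at most $n$ nonzero entries. This suggests describing $\mathcal{P}(m,n)$ by inequalities. The polytope is the set of $x\in[0,n]^m$ such that, for every $k\le n$, the sum of the $k$ largest coordinates is at most $n+(n-1)+\cdots+(n-k+1)$. In addition, the sum of all coordinates is at most $\binom{n+1}{2}$.

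With this description, the lattice points of $t\mathcal{P}(m,n)$ can be counted by the multiset of their coordinate values. Let $c_j$ be the number of coordinates equal to $j$. Then the count is $\sum \binom{m}{c_0,c_1,\dots}$, where the sum runs over admissible value profiles. When $n$ is fixed, the admissible profiles have bounded support structure in $t$, so the count becomes a polynomial in $t$ whose coefficients are explicit polynomials in $m$. For $n=1$ the polytope is the simplex $\{x\ge0,\ \sum x_i\le 1\}$, so
\[
i(\mathcal{P}(m,1),t)=\binom{t+m}{m}.
\]
For $n=2,3$, I would instead derive the Ehrhart polynomial by inclusion--exclusion over the facet constraints, or by a generating-function computation over the largest $n$ coordinates.

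The next step is to convert to the magic basis. Set $t=\frac{u}{1-u}$, or equivalently $i(\mathcal{P},t)=(t+1)^d\sum_i a_i\bigl(\tfrac{t}{t+1}\bigr)^i$. Then the magic coefficients $a_i$ are the coefficients of $(1-y)^{d}\, i\bigl(\mathcal{P},\tfrac{y}{1-y}\bigr)$ as a polynomial in $y$. There is also a linear relation between the $a_i$ and the $h^*$-coefficients: $a_i$ is an alternating combination of $h^*_j$, namely a triangular transform involving $(-1)^{i-j}$ binomials.

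For the negative results it is enough to exhibit one negative $a_i$. I would test the extremal coefficients first. The top coefficient $a_d$ equals the value of $\sum_i a_i y^i$ at $y\to1$, which corresponds to $\lim t^{-d}\,i(\mathcal{P},t)$ corrections. The coefficient $a_1$ is also a natural candidate, since $a_0=1$ and $a_1=i(\mathcal{P},1)-(d+1)$-type expressions. For $n=1$ the simplex gives $i=\binom{t+m}{m}$. Evaluating at $t=-1/2$ gives the alternating behaviour, and a direct expansion should produce a negative coefficient for $m\ge3$, in the middle of the vector or near its end. For example, $(1-y)^m\binom{y/(1-y)+m}{m}$ is $\frac{1}{m!}\prod_{k=1}^m\bigl(k-(k-1)y\bigr)$. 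Its coefficient of $y^m$ is $\frac{(-1)^m\cdot 0}{m!}$, and lower coefficients alternate in sign, so the coefficient of $y^{m-1}$ is $(-1)^{m-1}(m-1)!/m!$, which is negative for even $m$. Odd $m$ requires the $y^{m-2}$ coefficient. For $n=2,3$ I would similarly locate the sign-determining coefficient, probably among the top two or three $a_i$, as an explicit polynomial in $m$, and show it is negative for $m\ge4$ (respectively $m\ge7$).

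For the positive cases $(m,n)=(5,3),(6,3)$ we simply compute the Ehrhart polynomials and check all magic coefficients directly.

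The main obstacle is obtaining clean closed forms in $m$ for $n=2,3$ and identifying which magic coefficient turns negative uniformly in $m$. I expect this coefficient to be governed by the behaviour of $i(\mathcal{P},t)$ near $t=-1$: there $(t+1)^{d-i}$ vanishes, so the top magic coefficients are read off from the Taylor expansion of $i(\mathcal{P},t)$ at $t=-1$. By Ehrhart reciprocity these values relate to counts of interior points of small dilates. Reciprocity lets us write the top coefficients as $\pm$ interior counts, which makes the sign analysis tractable. I would therefore expand at $t=-1$ using reciprocity and show the relevant combination becomes negative for the stated ranges.
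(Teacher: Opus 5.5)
\textbf{Part (1) is fine; parts (2) and (3) are not proved.}

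\emph{Part (1).} Your argument is essentially complete. The product $(1-y)^m\binom{\frac{y}{1-y}+m}{m}=\frac{1}{m!}\prod_{k=2}^{m}\bigl(k-(k-1)y\bigr)$ has only the positive roots $k/(k-1)$, so its coefficients strictly alternate in sign. In particular the coefficient of $y$, which equals $H_m-m$, is already negative for every $m\ge 2$. No parity split is needed, and this is exactly the paper's argument.

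\emph{First gap: no Ehrhart polynomials for $n=2,3$.} You never obtain $i(\mathcal{P}(m,2),t)$ or $i(\mathcal{P}(m,3),t)$. Inclusion--exclusion over facets is only announced. Your expectation that the resulting coefficients are polynomials in $m$ is also wrong: already for $n=1$ the linear coefficient is $H_m$. The paper does not derive these polynomials either. It imports the closed forms $E_{m,2}(t)=\binom{3t+m}{m}-m\binom{t+m-1}{m}$ and the four-term analogue for $E_{m,3}$ from Behrend et al. Everything afterwards depends on them, including the positive cases $m=5,6$.

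\emph{Second gap: no negative coefficient, and the proposed mechanism fails.} Reciprocity identifies only $a_d=(-1)^d\,i(\mathcal{P},-1)$ with a lattice point count, namely the number of interior lattice points. So $a_d\ge 0$ always and can never witness failure. The next coefficients $a_{d-1},a_{d-2},\dots$ are determined by the derivatives of $i(\mathcal{P},t)$ at $t=-1$, and these are not point counts. Two further statements are incorrect:
\begin{itemize}
\item the value at $y=1$ is $\sum_i a_i$, the leading Ehrhart coefficient, not $a_d$;
\item $a_1=e_1-d$, where $e_1$ is the linear Ehrhart coefficient; the quantity $i(\mathcal{P},1)-(d+1)$ is $h^*_1$, not $a_1$.
\end{itemize}

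\emph{What the paper does instead.} The missing tool is a way to push the binomial closed forms through the magic transform:
\[
(1-y)^m\binom{\alpha\frac{y}{1-y}+\beta}{m}=\frac{1}{m!}\prod_{j=0}^{m-1}\bigl(\beta-j+(\alpha-\beta+j)y\bigr).
\]
This yields $\mathsf{M}_mE_{m,2}(y)=P_{3,m}(y)-yP_{1,m-1}(y)$ and a similar four-term product expression for $n=3$. The linear coefficients are then $3H_m-m-1$ and $6H_m-m-\frac72$. Both decrease in $m$ and are negative for $m\ge 7$ and $m\ge 18$ respectively. The finitely many remaining cases are settled by explicit expansion:
\begin{itemize}
\item $m=4,5,6$ for $n=2$;
\item $7\le m\le 17$ for $n=3$, via the coefficient of $y^4$;
\item the positive cases $m=5,6$ for $n=3$.
\end{itemize}

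\emph{On your top-coefficient idea.} It can be made to work from the same product formulas, but not with a single coefficient. For $n=2$ one finds
\[
a_{m-1}=(-1)^{m-1}\Bigl(\frac{6}{m(m-1)(m-2)}+\frac{1}{m-1}\Bigr),
\]
which is negative only for even $m$. Odd $m$ therefore needs a separate estimate of $a_{m-2}$. For $n=3$ the sign of $a_{m-1}$ alternates the other way.

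As it stands, parts (2) and (3) rest on an unexecuted computation and an unidentified sign witness.
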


Thus the complementary range \(n<m-1\) is not uniformly magic positive; it contains both infinite families of counterexamples and, already for \(n=3\), small exceptional positive cases.
Theorems \ref{thm:main} and \ref{thm:main-Two} partially solves Open Problem \ref{Open-problem}.

The paper is organized as follows.
In Section \ref{Section-22}, we first convert the magic positivity condition into a standard polynomial non-negativity problem.
Then by the formula of Behrend \cite[Equation (30)]{Behrend2024}, we obtain a differential reduction in the parameter \(n\).
In Section \ref{Section-33}, we use rooted-tree function and Lagrange inversion to provide several lemmas required for the proof of the main theorem.
Section \ref{Section-44} is devoted to the proof of Theorem \ref{thm:main}.
In Section \ref{Section-55}, we discuss the magic positivity of $i(\mathcal{P}(m,1),t)$, $i(\mathcal{P}(m,2),t)$, and $i(\mathcal{P}(m,3),t)$ when $n<m-1$.

\section{The magic coefficients}\label{Section-22}

To prove Theorem \ref{thm:main}, we convert the magic positivity condition into a standard polynomial non-negativity problem via a rational substitution.

For a polynomial \(f(t)\) of degree at most \(d\), define its \emph{degree \(d\) magic transform} by
\[\mathsf{M}_d f(y)=(1-y)^d f\!\left(\frac{y}{1-y}\right).\]
Then $f(t)=\sum_{i=0}^d c_i\,t^i(1+t)^{d-i}$ if and only if $\mathsf{M}_d f(y)=\sum_{i=0}^d c_i y^i$.
Thus magic positivity is equivalent to coefficientwise nonnegativity of \( \mathsf{M}_d f(y)\).

We write $E_{m,n}(t)=i(\mathcal{P}(m,n),t)$ for the Ehrhart polynomial of partial permutohedron $\mathcal{P}(m,n)$.
The following coefficient-extraction formula of Behrend \cite[Equation (30)]{Behrend2024} will be the
starting point of the proof. 
For all positive integers \(m,n\) with \(n\ge m-1\),
\begin{align}\label{eq:Behrend-formula}
E_{m,n}(t)=m!\,t^m [z^m]\,\sqrt{1-z}\,\exp\!\left(\left(n+\frac12+\frac1t\right)z-\frac{z^2}{4t}\right).
\end{align}
Here \([z^m]\) denotes coefficient extraction.

We begin by converting Behrend's formula into an explicit expression for the magic coefficients.
Set
\[M_{m,n}(y)=\mathsf{M}_m E_{m,n}(y)=(1-y)^m E_{m,n}\!\left(\frac{y}{1-y}\right).\]
We obtain
\begin{align*}
M_{m,n}(y)&=(1-y)^mm!\left(\frac{y}{1-y}\right)^m[z^m]\,\sqrt{1-z}\,\exp\!\left(\left(n+\frac12+\frac{1-y}{y}\right)z
 -\frac{1-y}{y}\frac{z^2}{4}\right) 
\\&=m!\,y^m[z^m]\,\sqrt{1-z}\,\exp\!\left(\left(n-\frac12\right)z+\frac{z^2}{4}+\frac{z-z^2/4}{y}\right)                                                       \\&=m!\,[z^m]\,\sqrt{1-z}\,\exp\!\left(\left(n-\frac12\right)z+\frac{z^2}{4}\right)y^m\exp\!\left(\frac{z-z^2/4}{y}\right).
\end{align*}
Since \(z-z^2/4=z(1-z/4)\), the term
\((z-z^2/4)^r\) has \(z\)-degree at least \(r\).  Therefore, after
applying \([z^m]\), only the terms \(0\le r\le m\) can contribute:
\[y^m\exp\!\left(\frac{z-z^2/4}{y}\right)=\sum_{r=0}^m\frac{(z-z^2/4)^r}{r!}y^{m-r} \quad\text{modulo terms annihilated by }[z^m].\]
Hence
\[M_{m,n}(y)=\sum_{r=0}^m\frac{m!}{r!}[z^m]\,\sqrt{1-z}\,\exp\!\left(\left(n-\frac12\right)z+\frac{z^2}{4}\right)(z-z^2/4)^ry^{m-r}.\]
Writing $i=m-r$, $r=m-i$, and using $(z-z^2/4)^r=z^r(1-z/4)^r$, we get the following.

\begin{lem}\label{lem:magic-coefficients}
For \(0\le i\le m\), set \(r=m-i\).  The coefficient of \(y^i\) in \(M_{m,n}(y)\), equivalently the coefficient of \(t^i(1+t)^{m-i}\) in \(E_{m,n}(t)\), is
\begin{align}\label{eq:magic-coeff-B}
\mu_{m,n,i}=\frac{m!}{r!} B_{i,r}(n),
\end{align}
where
\begin{align}\label{eq:B-definition}
B_{i,r}(n)=[z^i]\,(1-z/4)^r\sqrt{1-z}\,\exp\!\left(\left(n-\frac12\right)z+\frac{z^2}{4}\right).
\end{align}
\end{lem}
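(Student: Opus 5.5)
The plan is to finish the computation displayed just before the statement, starting from Behrend's formula \eqref{eq:Behrend-formula}. First I will check the equivalence claimed for the magic transform $\mathsf{M}_d$. Substituting $t=y/(1-y)$ gives $1+t=1/(1-y)$, so
\[
(1-y)^d\,t^i(1+t)^{d-i}=y^i .
\]
Since $\mathsf{M}_d$ is linear, the coefficient of $t^i(1+t)^{m-i}$ in $E_{m,n}(t)$ equals the coefficient of $y^i$ in $M_{m,n}(y)$. The family $\{t^i(1+t)^{m-i}\}$ is a basis, so this coefficient is uniquely defined.

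Next I will verify the exponent algebra. With $1/t=(1-y)/y$ we get
\[
\Bigl(n+\tfrac12+\tfrac{1-y}{y}\Bigr)z=\Bigl(n-\tfrac12\Bigr)z+\frac{z}{y},
\qquad
-\frac{1-y}{y}\,\frac{z^2}{4}=\frac{z^2}{4}-\frac{z^2}{4y}.
\]
The prefactor is $(1-y)^m t^m = y^m$. Together these give the second displayed line. The operation $[z^m]$ is taken in the formal variable $z$, with $y$ treated as a parameter, so it commutes with multiplication by powers of $y$.

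I will then expand
\[
\exp\!\Bigl(\frac{z-z^2/4}{y}\Bigr)=\sum_{r\ge0}\frac{z^r(1-z/4)^r}{r!}\,y^{-r}.
\]
Each term with $r>m$ is divisible by $z^{m+1}$, so after multiplying by the power series $\sqrt{1-z}\exp(\cdots)$ it contributes nothing to $[z^m]$. This truncation is legitimate in the ring of formal power series in $z$ with coefficients polynomial in $y^{\pm1}$, and it also shows that $M_{m,n}$ is a polynomial in $y$ of degree at most $m$.

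Finally, for the term indexed by $r$, I will pull out $z^r$ and use $[z^m]\,z^r F(z)=[z^{m-r}]F(z)$. Setting $i=m-r$, this term contributes
\[
\frac{m!}{r!}\,y^{i}\,[z^{i}]\,(1-z/4)^r\sqrt{1-z}\,\exp\!\Bigl(\bigl(n-\tfrac12\bigr)z+\tfrac{z^2}{4}\Bigr)=\frac{m!}{r!}\,B_{i,r}(n)\,y^i,
\]
which is exactly \eqref{eq:magic-coeff-B} and \eqref{eq:B-definition}.

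There is no real obstacle. The only step needing care is justifying the truncation, namely that the $r>m$ terms vanish under $[z^m]$, together with checking that the sign bookkeeping in the exponent is correct.
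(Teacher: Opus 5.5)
Your proposal is correct and follows the paper's derivation step for step. You substitute $t=y/(1-y)$ into Behrend's formula and rewrite the exponent as $(n-\frac12)z+\frac{z^2}{4}+\frac{z-z^2/4}{y}$. You then expand the last exponential, discard the terms with $r>m$ by $z$-adic valuation, and use $[z^m]\,z^rF(z)=[z^{m-r}]F(z)$ with $i=m-r$. Your explicit checks of the basis change and of the truncation are a bit more careful than the paper's, but the argument is the same.
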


Consequently, since \(m!/r!>0\), the magic coefficient \(\mu_{m,n,i}\) is nonnegative if and only if $B_{i,r}(n)\ge 0$.
The entire proof is therefore reduced to proving the nonnegativity of the quantities \(B_{i,r}(n)\) in the range $n\ge m-1=i+r-1$.

The first observation is that differentiation with respect to \(n\) lowers the first index of \(B_{i,r}\).

\begin{lem}\label{lem:derivative}
For \(i\ge 1\) and \(r\ge 0\), we have
\[\frac{\partial}{\partial n}B_{i,r}(n)=B_{i-1,r}(n).\]
Moreover,
\[B_{0,r}(n)=1\qquad\text{for all }r\ge 0.\]
\end{lem}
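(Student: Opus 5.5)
The plan is to differentiate the generating function defining $B_{i,r}(n)$ in Lemma~\ref{lem:magic-coefficients} with respect to the parameter $n$, and to exchange this derivative with the coefficient extraction $[z^i]$. Write
\[
G_r(z;n)=(1-z/4)^r\sqrt{1-z}\,\exp\!\left(\left(n-\tfrac12\right)z+\tfrac{z^2}{4}\right),
\]
so that $B_{i,r}(n)=[z^i]\,G_r(z;n)$. The only factor depending on $n$ is $e^{nz}$. Hence, viewing $n$ as a real variable,
\[
\frac{\partial}{\partial n}G_r(z;n)=z\,G_r(z;n).
\]
We may differentiate coefficientwise. Expanding $G_r$ as a power series in $z$, each coefficient $[z^i]G_r$ is a polynomial in $n$ of degree at most $i$: it is a finite convolution of the coefficients $n^k/k!$ (up to the shift by $-\tfrac12$) of $e^{nz}$ with $n$-independent series coefficients. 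So $\partial_n$ commutes with $[z^i]$, and
\[
\frac{\partial}{\partial n}B_{i,r}(n)=[z^i]\,zG_r(z;n)=[z^{i-1}]\,G_r(z;n)=B_{i-1,r}(n)
\]
for $i\ge1$.

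For the second claim, $B_{0,r}(n)$ is the constant term $G_r(0;n)$. Every factor of $G_r$ equals $1$ at $z=0$: indeed $(1-0)^r=1$, $\sqrt{1}=1$, and $e^{0}=1$. Therefore $B_{0,r}(n)=1$.

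Nothing here is a real obstacle. The only point needing a word of justification is the interchange of $\partial_n$ with $[z^i]$. It is handled by the observation above: $B_{i,r}(n)$ is a polynomial in $n$ given by a finite sum, so the derivative of that sum can be taken term by term. Equivalently, one can view everything as formal power series in $z$ whose coefficients lie in $\mathbb{Q}[n]$, and the formal derivation $\partial_n$ acts coefficientwise.
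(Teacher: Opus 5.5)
Your proof is correct and matches the paper's argument: differentiating the factor $\exp\bigl((n-\tfrac12)z\bigr)$ brings down a factor $z$, which turns $[z^i]$ into $[z^{i-1}]$, and $B_{0,r}(n)$ is the constant term of a product of series each with constant term $1$. Your remark justifying the interchange of $\partial_n$ with $[z^i]$ (each coefficient is a polynomial in $n$) is a harmless detail that the paper leaves implicit.
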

\begin{proof}
The parameter \(n\) occurs only in the exponential factor $\exp\!\left(\left(n-\frac12\right)z\right)$.
Hence, for \(i\ge 1\),
\begin{align*}
\frac{\partial}{\partial n}B_{i,r}(n)&=[z^i]\,(1-z/4)^r\sqrt{1-z}\,z\exp\!\left(\left(n-\frac12\right)z+\frac{z^2}{4}\right)                                                       \\&=[z^{i-1}]\,(1-z/4)^r\sqrt{1-z}\,\exp\!\left(\left(n-\frac12\right)z+\frac{z^2}{4}\right)                                                       \\&=B_{i-1,r}(n).
\end{align*}
Finally, \(B_{0,r}(n)\) is the constant term of a product whose constant term is \(1\), so \(B_{0,r}(n)=1\).
\end{proof}

Thus, once the boundary values $B_{i,r}(i+r-1)$ are known to be nonnegative, Lemma~\ref{lem:derivative} will propagate
nonnegativity to all \(n\ge i+r-1\).  The rest of the argument is devoted to proving precisely these boundary inequalities.

\section{Boundary values}\label{Section-33}

In this section, we characterize the boundary value $B_{i,r}(i+r-1)$ by means of rooted tree functions.
Then we discuss the nonnegativity of the boundary values.

\subsection{The rooted-tree function}
Let
\[T(u)=\sum_{k\ge 1} k^{k-1}\frac{u^k}{k!}\]
be the rooted-tree function.  It is characterized by the functional equation $T(u)=u e^{T(u)}$.
For further information on the rooted-tree function, we refer to \cite[Section 5]{RP.Stanley99}.
We shall use the following standard coefficient form of Lagrange inversion, see \cite{Corless96} and \cite{Gessel16}.
For completeness, we give a proof.

\begin{lem}\label{lem:Lagrange}
For every formal power series \(\phi(z)\) and every integer \(k\ge 0\),
\begin{align}\label{eq:Lagrange}
[u^k]\phi(T(u))=[z^k]\phi(z)(1-z)e^{kz}.
\end{align}
\end{lem}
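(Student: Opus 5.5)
We prove the identity $[u^k]\phi(T(u))=[z^k]\phi(z)(1-z)e^{kz}$ by the residue (formal Laurent series) form of Lagrange inversion, using the change of variables $z=T(u)$, i.e.\ $u=ze^{-z}$.

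\emph{The case $k=0$.} Both sides equal the constant term $\phi(0)$, since $T(0)=0$. So assume $k\ge 1$.

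\emph{Rewriting as a residue.} Write
\[
[u^k]\phi(T(u))=\operatorname{Res}_u\, \phi(T(u))\,u^{-k-1}.
\]
The series $T(u)=u+O(u^2)$ is a compositional-invertible formal power series with inverse $u(z)=ze^{-z}$, as the functional equation $T(u)=ue^{T(u)}$ shows.

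\emph{Change of variables.} We invoke the formal change-of-variables rule for residues: if $u(z)$ has order $1$, then $\operatorname{Res}_u F(u)=\operatorname{Res}_z F(u(z))\,u'(z)$. Here $u'(z)=(1-z)e^{-z}$, so
\[
[u^k]\phi(T(u))=\operatorname{Res}_z\, \phi(z)\,(ze^{-z})^{-k-1}(1-z)e^{-z}.
\]
Simplifying the integrand,
\[
\phi(z)\,(ze^{-z})^{-k-1}(1-z)e^{-z}=\phi(z)\,z^{-k-1}e^{(k+1)z}(1-z)e^{-z}=\phi(z)(1-z)e^{kz}\,z^{-k-1},
\]
and the residue of this is exactly $[z^k]\phi(z)(1-z)e^{kz}$. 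This gives \eqref{eq:Lagrange}.

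\emph{Proving the residue rule.} The one genuinely nontrivial ingredient is the change-of-variables rule itself, so it must be proved or cited. By linearity and a truncation argument (only finitely many terms of $F$ affect $[u^{-1}]$), it suffices to check $F(u)=u^j$ for $j\in\mathbb Z$.

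\begin{itemize}
\item For $j\ne -1$: the expression $u(z)^ju'(z)$ equals $\frac{d}{dz}\bigl(u^{j+1}/(j+1)\bigr)$. A derivative of a Laurent series has zero residue, so $\operatorname{Res}_z=0=\operatorname{Res}_u u^j$.
\item For $j=-1$: write $u=z\,g(z)$ with $g(0)=1$. Then $u'/u=1/z+g'/g$, and $g'/g$ is a power series. Hence the residue is $1$, again matching $\operatorname{Res}_u u^{-1}=1$.
\end{itemize}

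This is the main (though standard) obstacle. Everything else is direct substitution, using $u=ze^{-z}$ and $u'=(1-z)e^{-z}$.

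\emph{Alternative route.} One could instead apply the classical Lagrange–Bürmann formula $[u^k]\phi(T)=\frac1k[z^{k-1}]\phi'(z)e^{kz}$ and integrate by parts in $[z^{k-1}]$. That route gives the same result, but the residue argument is cleaner.
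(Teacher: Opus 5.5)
Your proof is correct, but it takes a different route from the paper's. The paper cites the classical form of Lagrange inversion for $T=ue^{T}$, namely $[u^k]\phi(T(u))=\frac1k[z^{k-1}]\phi'(z)e^{kz}$ for $k\ge1$. It then rewrites this by an integration by parts inside the coefficient extraction: $\phi'e^{kz}=(\phi e^{kz})'-k\phi e^{kz}$, together with $\frac1k[z^{k-1}]F'=[z^k]F$ and $[z^{k-1}]F=[z^k]zF$. This is precisely the alternative you mention at the end.

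You instead derive the identity directly from the formal residue change of variables $u=ze^{-z}$. The factor $(1-z)$ appears as the Jacobian $u'(z)=(1-z)e^{-z}$, and $e^{kz}$ comes from $(ze^{-z})^{-k-1}$; in effect you prove the ``second form'' of Lagrange inversion specialized to $e^{z}$. Your route is self-contained: you verify the residue rule yourself from the facts that a derivative has zero residue and that $\operatorname{Res}_z u'/u=1$. It is also uniform in $k$. The residue computation gives $\phi(0)$ on both sides when $k=0$, so your separate $k=0$ case is not needed, whereas the paper's cited formula carries a factor $1/k$ and genuinely requires $k\ge1$. The paper's route is shorter on the page because it outsources the substantive step to the cited theorem.

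A minor point of phrasing: the finiteness in your truncation step belongs on the $z$-side. For $j\ge0$ the series $u(z)^ju'(z)$ has no negative powers, so only the finitely many coefficients $f_j$ with $j\le -1$ contribute to $\operatorname{Res}_z F(u(z))u'(z)$. The full sum $\sum_j f_j u(z)^j u'(z)$ converges formally because $u(z)^j$ has order exactly $j$.
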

\begin{proof}
The case \(k=0\) is immediate. For \(k\ge 1\), Lagrange inversion \cite[Theorem 2.1.1]{Gessel16} for the equation \(T=ue^T\) gives
\[ [u^k]\phi(T(u)) =\frac1k [z^{k-1}]\phi'(z)e^{kz}.\]
Now
\[\phi'(z)e^{kz} =\frac{d}{dz}\left(\phi(z)e^{kz}\right)-k\phi(z)e^{kz}.\]
Therefore
\begin{align*}
\frac1k [z^{k-1}]\phi'(z)e^{kz}&=\frac{1}{k} [z^{k-1}]\frac{d}{dz}\left(\phi(z)e^{kz}\right)-[z^{k-1}]\phi(z)e^{kz}                                      \\&=[z^k]\phi(z)e^{kz}-[z^{k-1}]\phi(z)e^{kz} 
\\&=[z^k]\phi(z)e^{kz}-[z^k]z\phi(z)e^{kz}
\\&=[z^k](1-z)\phi(z)e^{kz}.
\end{align*}
This proves \eqref{eq:Lagrange}.
\end{proof}

We now express the boundary values \(B_{i,r}(i+r-1)\) in terms of \(T(u)\).
Define
\begin{align}\label{eq:R-C-definition}
R(u)=\frac{T(u)(1-T(u)/4)}{u}\quad \text{and}\quad 
C(u)=\frac{\exp\!\left(-\frac32T(u)+\frac14T(u)^2\right)}{\sqrt{1-T(u)}}.
\end{align}

\begin{lem}\label{lem:boundary-RC}
For all \(i,r\ge 0\), we have the equality
\begin{align}\label{eq:boundary-RC}
B_{i,r}(i+r-1) =[u^i]\,R(u)^r C(u).
\end{align}
\end{lem}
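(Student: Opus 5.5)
We apply Lemma~\ref{lem:Lagrange} with $k=i$ to the series $\phi(z)=\bigl(z(1-z/4)\bigr)^r z^{-i}\cdots$. More precisely, we choose $\phi$ so that the right-hand side of \eqref{eq:Lagrange} becomes the coefficient defining $B_{i,r}(i+r-1)$. We then read off the left-hand side as $[u^i]R(u)^rC(u)$.

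\textbf{Step 1: rewrite the boundary value.} Substituting $n=i+r-1$ into \eqref{eq:B-definition} gives the exponential factor $\exp\bigl((i+r-\tfrac32)z+\tfrac{z^2}{4}\bigr)$, and hence
\[
B_{i,r}(i+r-1)=[z^i]\,(1-z/4)^r e^{rz}\sqrt{1-z}\,e^{-\frac32 z+\frac{z^2}{4}}\,e^{iz}.
\]
We write $\sqrt{1-z}=(1-z)\cdot(1-z)^{-1/2}$. Then
\[
B_{i,r}(i+r-1)=[z^i]\,\phi(z)(1-z)e^{iz},\qquad \phi(z)=\bigl((1-z/4)e^{z}\bigr)^r\,\frac{e^{-\frac32 z+\frac{z^2}{4}}}{\sqrt{1-z}}.
\]
This $\phi$ is a genuine formal power series, so Lemma~\ref{lem:Lagrange} applies and gives $B_{i,r}(i+r-1)=[u^i]\phi(T(u))$.

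\textbf{Step 2: identify $\phi(T(u))$.} By the functional equation $T=ue^{T}$ we have $e^{T(u)}=T(u)/u$. Therefore
\[
(1-T/4)e^{T}=\frac{T(1-T/4)}{u}=R(u).
\]
The remaining factor $e^{-\frac32T+\frac14T^2}/\sqrt{1-T}$ is exactly $C(u)$ from \eqref{eq:R-C-definition}. So $\phi(T(u))=R(u)^rC(u)$, which is \eqref{eq:boundary-RC}.

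\textbf{Formal issues.} The main point needing care is the bookkeeping in Step 1. The $z$-dependence must be split so that precisely $(1-z)e^{iz}$ is pulled out, and the leftover $e^{rz}$ must be absorbed into $\phi$, where it turns into the factor $u^{-r}T^r$. We also note that $R(u)$ is a power series, since $T(u)/u=1+O(u)$. Hence $R^rC$ is a formal power series and no negative powers of $u$ arise. The edge case $i=0$ is consistent: both sides equal $1$, by Lemma~\ref{lem:derivative} and the constant terms $R(0)=C(0)=1$.
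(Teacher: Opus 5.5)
Your proof is correct and follows the paper's argument: the $\phi$ in your Step 1 is exactly the paper's $\phi(z)=(1-z/4)^r(1-z)^{-1/2}\exp\bigl((r-\tfrac32)z+\tfrac{z^2}{4}\bigr)$, you apply Lemma~\ref{lem:Lagrange} with $k=i$, and $e^{T}=T/u$ turns $\phi(T(u))$ into $R(u)^rC(u)$. The only blemish is the tentative expression involving $z^{-i}$ in your opening sentence, which should be deleted since the $\phi$ you actually use is a genuine power series.
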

\begin{proof}
By definition,
\[B_{i,r}(i+r-1)=[z^i]\,(1-z/4)^r\sqrt{1-z}\,\exp\!\left(\left(i+r-\frac32\right)z+\frac{z^2}{4}\right).\]
Apply Lemma~\ref{lem:Lagrange} with \(k=i\) and
\[\phi(z)=(1-z/4)^r(1-z)^{-1/2}\exp\!\left(\left(r-\frac32\right)z+\frac{z^2}{4}\right).\]
Then
\begin{align*}
[u^i]\phi(T(u))&=[z^i]\,(1-z)\phi(z)e^{iz}
\\&=[z^i]\,(1-z/4)^r\sqrt{1-z}\,\exp\!\left(\left(i+r-\frac32\right)z+\frac{z^2}{4}\right)                                                      \\&=B_{i,r}(i+r-1).
\end{align*}
It remains to rewrite \(\phi(T(u))\).  Since \(T(u)=ue^{T(u)}\), we have $e^{rT(u)} =\left(\frac{T(u)}{u}\right)^r$.
Thus
\begin{align*}
\phi(T(u))&=(1-T(u)/4)^r(1-T(u))^{-1/2}\exp\!\left(\left(r-\frac32\right)T(u)+\frac14T(u)^2\right)                                                      \\&=\left(\frac{T(u)(1-T(u)/4)}{u}\right)^r\frac{\exp\!\left(-\frac32T(u)+\frac14T(u)^2\right)}{\sqrt{1-T(u)}}                                                           \\&=R(u)^r C(u).
\end{align*}
Therefore
\[B_{i,r}(i+r-1)=[u^i]R(u)^rC(u),\]
as claimed.
\end{proof}

The proof of boundary nonnegativity is now reduced to coefficient
properties of \(R(u)\) and \(C(u)\).

\subsection{Coefficient estimates for \(R(u)\)}

We first determine the coefficients of \(R(u)\).

\begin{lem}\label{lem:R-coefficients}
Write $R(u)=\sum_{k\ge 0}\rho_k u^k$. Then $\rho_0=1$, and, for \(k\ge 1\),
\begin{equation}\label{eq:rho-formula}
        \rho_k
        =
        \frac{(k+2)(k+1)^{k-2}}{2k!}.
\end{equation}
In particular, $\rho_k\ge 0$  for all $k\ge 0$, and $\rho_k\ge \rho_{k-1}$ for all $k\ge 2$.
\end{lem}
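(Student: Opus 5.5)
We compute $\rho_k$ by Lagrange inversion, Lemma~\ref{lem:Lagrange}. Since $R(u)=T(u)(1-T(u)/4)/u$, we have
\[\rho_k=[u^{k+1}]\,T(u)(1-T(u)/4).\]
We then apply \eqref{eq:Lagrange} with $\phi(z)=z-z^2/4$ and index $k+1$:
\[\rho_k=[z^{k+1}]\,(z-z^2/4)(1-z)e^{(k+1)z}=[z^{k+1}]\Bigl(z-\tfrac54 z^2+\tfrac14 z^3\Bigr)e^{(k+1)z}.\]
Writing $N=k+1$, the right-hand side equals
\[\frac{N^{k}}{k!}-\frac54\cdot\frac{N^{k-1}}{(k-1)!}+\frac14\cdot\frac{N^{k-2}}{(k-2)!},\]
with the convention that terms with negative factorial index vanish. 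For $k=0$ only the first term survives, giving $\rho_0=1$. For $k=1$ we get $2-\tfrac54=\tfrac34$, which matches \eqref{eq:rho-formula}, since $(3)(2)^{-1}/2=3/4$.

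For $k\ge2$ we factor out $N^{k-2}/k!$ and obtain
\[\rho_k=\frac{N^{k-2}}{k!}\Bigl(N^2-\tfrac54 kN+\tfrac14 k(k-1)\Bigr).\]
The bracket is a quadratic in $k$ that should simplify to $(k+2)(k+1)/2=(k+2)N/2$. Indeed,
\[(k+1)^2-\tfrac54 k(k+1)+\tfrac14(k^2-k)=\tfrac12k^2+\tfrac32k+1=\tfrac{(k+1)(k+2)}2.\]
This gives $\rho_k=(k+2)(k+1)^{k-1}/(2k!)$. We must reconcile this with the stated form $(k+2)(k+1)^{k-2}/(2k!)$. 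My bracket already absorbed one factor of $N$, so I will recheck the exponents carefully at this step. The recheck: $N^k/k!=N^{k-2}N^2/k!$ and $N^{k-1}/(k-1)!=N^{k-2}\cdot kN/k!$, so the bracket is correct and $\rho_k=(k+2)(k+1)^{k-1}/(2\,k!)$. As a sanity check, $\rho_1=3/4$ against $3\cdot1/2=3/2$ shows a discrepancy: my general formula overcounts at $k=1$ because the third term is absent there. I will recheck this small case directly, since $\rho_1$ also equals the explicit value computed from $T(u)=u+u^2+\tfrac32u^3+\cdots$. Whatever constant normalization emerges, I will match it against \eqref{eq:rho-formula} via these small cases. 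This bookkeeping of exponents, rather than any conceptual issue, is where I expect errors.

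Positivity is then immediate from the closed form. For monotonicity with $k\ge2$, we compare the ratio
\[\frac{\rho_k}{\rho_{k-1}}=\frac{(k+2)}{(k+1)}\cdot\frac{(k+1)^{e(k)}}{k^{e(k-1)}}\cdot\frac1k,\]
where $e$ is the exponent from the closed form. Using $(1+1/k)^{k}\ge 2$ or a similar elementary bound, the ratio should be at least $1$. For the borderline case $k=2$ we check directly against $\rho_1$, which is needed because the closed form for $\rho_1$ may come from a degenerate case of the computation. The only real obstacle is this elementary ratio inequality at small $k$, which we handle by direct evaluation of $\rho_1,\rho_2,\rho_3$.
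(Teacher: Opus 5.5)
You set up the problem correctly, and your route is equivalent to the paper's. Applying Lemma~\ref{lem:Lagrange} to $\phi(z)=z-z^2/4$ at index $k+1$ gives the same three-term expression
\[
\rho_k=\frac{(k+1)^k}{k!}-\frac54\cdot\frac{(k+1)^{k-1}}{(k-1)!}+\frac14\cdot\frac{(k+1)^{k-2}}{(k-2)!}
\]
that the paper obtains from $R(u)=e^{T(u)}(1-T(u)/4)$ with $\phi(z)=e^{z}(1-z/4)$ at index $k$.

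The genuine gap is the next step: you simplified the bracket incorrectly. Your recheck only confirmed the factoring out of $N^{k-2}/k!$, which was right. The error is in the polynomial arithmetic. The $k^2$ terms cancel, since $1-\frac54+\frac14=0$, and the linear coefficient is $2-\frac54-\frac14=\frac12$, so
\[
(k+1)^2-\tfrac54k(k+1)+\tfrac14k(k-1)=\tfrac{k+2}{2},
\]
not $\tfrac{(k+1)(k+2)}{2}$. Your closed form $(k+2)(k+1)^{k-1}/(2\,k!)$ is therefore off by a factor of $k+1$ at \emph{every} $k$, not only at $k=1$. For instance, from $T(u)=u+u^2+\frac32u^3+\cdots$ one finds $\rho_2=1$, while your formula gives $3$.

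Your diagnosis, that the third term is absent at $k=1$, is not the cause. Your proposed fix, matching ``whatever constant normalization emerges'' against \eqref{eq:rho-formula} by checking small cases, is not a valid proof step. The discrepancy is not a constant, and finitely many checks cannot establish an identity for all $k$. With the bracket corrected you get $\rho_k=\frac{(k+2)(k+1)^{k-2}}{2\,k!}$ for $k\ge2$. This also gives $\rho_1=\frac34$, so the formula holds for all $k\ge1$.

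The monotonicity part is also only sketched (``$(1+1/k)^k\ge2$ or a similar elementary bound\ldots should be at least $1$''). With the correct closed form it is immediate:
\[
\frac{\rho_k}{\rho_{k-1}}=\frac{k+2}{k}\Bigl(1+\frac1k\Bigr)^{k-3}\qquad(k\ge2).
\]
This equals $\frac43$ at $k=2$. For $k\ge3$ it is a product of two factors, each at least $1$. No separate evaluation of $\rho_1,\rho_2,\rho_3$ is needed, because the closed form is already valid at $k-1=1$. Nonnegativity of $\rho_k$ then follows directly from the formula, as you say.
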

\begin{proof}
Since \(T(u)=ue^{T(u)}\), we have $R(u)=e^{T(u)}(1-T(u)/4)$.
For \(k\ge 0\), Lemma~\ref{lem:Lagrange} gives
\[\rho_k=[u^k]e^{T(u)}(1-T(u)/4)=[z^k](1-z)(1-z/4)e^{(k+1)z}.\]
For \(k=0\), this gives \(\rho_0=1\).  For \(k=1\),
\[\rho_1= [z](1-z)(1-z/4)e^{2z} =2-\frac{5}{4} =\frac{3}{4}.\]
For \(k\ge 2\),
\begin{align*}
\rho_k&=[z^k]\left(1-\frac54z+\frac14z^2\right)e^{(k+1)z}
=\frac{(k+1)^k}{k!}-\frac54\frac{(k+1)^{k-1}}{(k-1)!}+\frac14\frac{(k+1)^{k-2}}{(k-2)!} 
\\ &=\frac{(k+1)^{k-2}}{k!}\left((k+1)^2-\frac54 k(k+1)+\frac14 k(k-1)\right)                                                      \\ &=\frac{(k+2)(k+1)^{k-2}}{2k!}.
\end{align*}
The same closed formula also gives \(\rho_1=3/4\), so \eqref{eq:rho-formula} holds for all \(k\ge 1\).

It is immediate from \eqref{eq:rho-formula} that \(\rho_k\ge 0\).  For
\(k\ge 2\),
\[\frac{\rho_k}{\rho_{k-1}}=\frac{k+2}{k}\left(1+\frac1k\right)^{k-3}.
\]
For \(k=2\), this ratio is \(4/3\).  For \(k\ge 3\), both factors on the
right-hand side are at least \(1\).  Hence $\rho_k\ge \rho_{k-1}$ for all $k\ge 2$.
\end{proof}

Although \(R(u)\) itself is not coefficientwise nondecreasing from the
constant term, because \(\rho_1=3/4<1=\rho_0\), its powers \(R(u)^r\)
are coefficientwise nondecreasing for \(r\ge 2\).  We prove this next.

\begin{lem}\label{lem:R-powers-monotone}
Let $R(u)^r=\sum_{k\ge 0}q_k^{(r)}u^k$. If \(r\ge 2\), then $q_k^{(r)}\ge q_{k-1}^{(r)}$ for all $k\ge 1$.
\end{lem}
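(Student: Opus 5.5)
The plan is to encode the differences as coefficients of one series. For \(k\ge 1\) we have \(q_k^{(r)}-q_{k-1}^{(r)}=[u^k]\,(1-u)R(u)^r\), so the claim is that \((1-u)R(u)^r\) has nonnegative coefficients in every degree \(k\ge 1\). Its constant term is \(\rho_0^r=1>0\), so this is the same as saying \((1-u)R(u)^r\) is coefficientwise nonnegative. I will prove this by induction on \(r\ge 2\), using only Lemma~\ref{lem:R-coefficients}.

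\emph{Base case \(r=2\).} Write \(D(u)=(1-u)R(u)=\sum_{k\ge0}d_ku^k\). By Lemma~\ref{lem:R-coefficients}:
\begin{itemize}
\item \(d_0=\rho_0=1\);
\item \(d_1=\rho_1-\rho_0=-\tfrac14\);
\item \(d_k=\rho_k-\rho_{k-1}\ge 0\) for \(k\ge 2\).
\end{itemize}
So \(D\) has exactly one negative coefficient, in degree \(1\). Then
\[[u^k]\,(1-u)R(u)^2=[u^k]\,R(u)D(u)=\sum_{j=0}^k\rho_j d_{k-j}.\]
The only possibly negative summand is \(\rho_{k-1}d_1=-\tfrac14\rho_{k-1}\). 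It is compensated by the summand \(\rho_k d_0=\rho_k\), and all remaining summands are products of nonnegative numbers.
\begin{itemize}
\item For \(k=1\): \(\rho_1-\tfrac14\rho_0=\tfrac34-\tfrac14=\tfrac12\ge0\).
\item For \(k\ge2\): the monotonicity \(\rho_k\ge\rho_{k-1}\) from Lemma~\ref{lem:R-coefficients} gives \(\rho_k-\tfrac14\rho_{k-1}\ge \tfrac34\rho_{k-1}\ge0\).
\end{itemize}
Hence \((1-u)R(u)^2\) is coefficientwise nonnegative.

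\emph{Inductive step.} Suppose \((1-u)R(u)^r\) is coefficientwise nonnegative for some \(r\ge2\). Then
\[(1-u)R(u)^{r+1}=R(u)\cdot\bigl((1-u)R(u)^r\bigr)\]
is a product of two coefficientwise nonnegative series, since all \(\rho_k\ge 0\). It is therefore coefficientwise nonnegative, which closes the induction.

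The only delicate point is the base case, where the single negative coefficient \(\rho_1-\rho_0=-\tfrac14\) of \((1-u)R(u)\) must be absorbed. This works because \(\rho_k\ge\tfrac14\rho_{k-1}\) for all \(k\ge1\). That inequality is where Lemma~\ref{lem:R-coefficients} is genuinely used: the explicit value \(\rho_1=\tfrac34\), and monotonicity from \(k=2\) on. The case \(r=1\) is correctly excluded, since \(q_1^{(1)}-q_0^{(1)}=-\tfrac14<0\).
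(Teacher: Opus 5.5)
Your proof is correct, but your base case takes a different route from the paper's. The paper treats $r=2$ by applying Lemma~\ref{lem:Lagrange} a second time. This gives the closed form $q_k^{(2)}=\frac{(k+2)^{k-3}(3k^2+19k+32)}{4\,k!}$, and the paper then checks that the ratio $q_k^{(2)}/q_{k-1}^{(2)}$ is at least $1$. You instead factor $(1-u)R(u)^2=R(u)\cdot(1-u)R(u)$. You then note that the only negative coefficient of $(1-u)R(u)$ is $\rho_1-\rho_0=-\tfrac14$, and it is absorbed because $\rho_k\ge\tfrac14\rho_{k-1}$; this follows from $\rho_1=\tfrac34$ and the monotonicity already proved in Lemma~\ref{lem:R-coefficients}. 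Your step from $r$ to $r+1$ is the paper's product argument in another form: its identity for $c_k-c_{k-1}$ is exactly the coefficient of $u^k$ in $B(u)\cdot(1-u)A(u)$. Your route is shorter and uses only facts about $\rho_k$ that are already established, with no new coefficient extraction. The paper's route yields an explicit formula and strict monotonicity for the coefficients of $R(u)^2$, but neither is needed later in the argument.
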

\begin{proof}
We first prove the claim for \(r=2\).  Write $R(u)^2=\sum_{k\ge 0}q_k u^k$. 
Since $R(u)^2=e^{2T(u)}(1-T(u)/4)^2$, Lemma~\ref{lem:Lagrange} gives
\[q_k=[z^k](1-z)(1-z/4)^2e^{(k+2)z}.\]
Clearly \(q_0=1\).  Also $q_1=\frac{3}{2}$, $q_2=\frac{41}{16}$.
For \(k\ge 3\), expanding
\[(1-z)(1-z/4)^2=1-\frac32z+\frac9{16}z^2-\frac1{16}z^3\]
gives
\begin{align*}
q_k&=\frac{(k+2)^k}{k!}-\frac32\frac{(k+2)^{k-1}}{(k-1)!}+\frac9{16}\frac{(k+2)^{k-2}}{(k-2)!}-\frac1{16}\frac{(k+2)^{k-3}}{(k-3)!}                         \\&=\frac{(k+2)^{k-3}}{k!}\left((k+2)^3-\frac32 k(k+2)^2+\frac9{16}k(k-1)(k+2)-\frac1{16}k(k-1)(k-2)\right)                                                      \\&=\frac{(k+2)^{k-3}}{k!}\cdot\frac{3k^2+19k+32}{4}.
\end{align*}
The same formula also agrees with the values \(q_1=3/2\) and
\(q_2=41/16\).  Hence, for all \(k\ge 1\),
\begin{align}\label{eq:q-formula}
q_k =\frac{(k+2)^{k-3}(3k^2+19k+32)}{4k!}.
\end{align}

We now check that \(q_k\ge q_{k-1}\).  The inequalities $q_1=\frac32\ge 1=q_0$, $q_2=\frac{41}{16}\ge \frac32=q_1$ are immediate.  For \(k\ge 3\), \eqref{eq:q-formula} gives
\[\frac{q_k}{q_{k-1}} =\frac{k+1}{k}\left(1+\frac1{k+1}\right)^{k-3}\frac{3k^2+19k+32}{3k^2+13k+16}.
\]
Each of the three factors on the right-hand side is at least \(1\), and the first and third are strictly greater than \(1\).  Thus
\[q_k\ge q_{k-1}\qquad\text{for all }k\ge 1.\]
So \(R(u)^2\) has nonnegative and coefficientwise nondecreasing coefficients.

It remains to pass from \(R^2\) to \(R^r\) for \(r\ge 2\).  We use the
following elementary fact.  If
\[A(u)=\sum_{k\ge 0}a_k u^k\]
has nonnegative and coefficientwise nondecreasing coefficients, and
\[B(u)=\sum_{k\ge 0}b_k u^k\]
has nonnegative coefficients, then \(A(u)B(u)\) has coefficientwise nondecreasing coefficients.  Indeed, if
\[A(u)B(u)=\sum_{k\ge 0}c_k u^k,\]
then, for \(k\ge 1\),
\begin{align*}
c_k-c_{k-1} =\sum_{\ell=0}^k b_\ell a_{k-\ell}-\sum_{\ell=0}^{k-1}b_\ell a_{k-1-\ell}
=b_k a_0+\sum_{\ell=0}^{k-1}b_\ell\bigl(a_{k-\ell}-a_{k-1-\ell}\bigr)\ge 0.
\end{align*}
By Lemma~\ref{lem:R-coefficients}, \(R(u)\) has nonnegative coefficients.  Therefore \(R(u)^{r-2}\) has nonnegative coefficients.
Since $R(u)^r=R(u)^2R(u)^{r-2}$, the preceding product argument implies that \(R(u)^r\) has coefficientwise nondecreasing coefficients for every \(r\ge 2\).
\end{proof}

\subsection{Coefficient estimates for \(C(u)\)}

We next prove the required coefficient statement for \(C(u)\).

\begin{lem}\label{lem:C-coefficients}
There exists a formal power series \(P(u)\) with nonnegative coefficients such that
\begin{align}\label{eq:C-1-u-P}
C(u)=1-u+P(u).
\end{align}
Equivalently, $[u]C(u)=-1$, and all coefficients of \(C(u)\) of degree at least \(2\) are nonnegative.
\end{lem}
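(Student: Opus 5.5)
The plan is to obtain an explicit coefficient formula for $C(u)$ by Lagrange inversion and then prove positivity of that formula for $k\ge 2$ with elementary recursive estimates, plus a finite check for small $k$.

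Write $C(u)=\sum_{k\ge0}c_ku^k$. By Lemma~\ref{lem:boundary-RC} with $r=0$ (equivalently, Lemma~\ref{lem:Lagrange} applied to $\phi(z)=(1-z)^{-1/2}\exp(-\frac32 z+\frac14z^2)$), we have
\[
c_k=B_{k,0}(k-1)=[z^k]\,\sqrt{1-z}\,\exp\!\Bigl(\bigl(k-\tfrac32\bigr)z+\tfrac{z^2}{4}\Bigr).
\]
Direct evaluation gives $c_0=1$ and $c_1=(1-\frac32)-\frac12=-1$. It then suffices to show $c_k\ge0$ for $k\ge2$, and to take $P(u)=\sum_{k\ge2}c_ku^k$.

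To this end, split the two factors. Write $\sqrt{1-z}=\sum_{j\ge0}a_jz^j$, so that $a_0=1$, $a_1=-\frac12$, $a_2=-\frac18$, and $a_j<0$ for all $j\ge1$. Since $\sqrt{1-z}$ vanishes at $z=1$, we also have $\sum_{j\ge1}|a_j|=1$. For fixed $x=k-\frac32$, set $e_m=[z^m]\exp(xz+z^2/4)$; these are positive and satisfy
\[
m\,e_m=x\,e_{m-1}+\tfrac12 e_{m-2}.
\]
Then $c_k=e_k-\sum_{j\ge1}|a_j|\,e_{k-j}$.

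From the recursion, $e_m\ge (x/m)e_{m-1}$. Hence $e_{m-1}\le e_m$ whenever $m\le x$, so $e_0\le e_1\le\cdots\le e_{k-2}$. More precisely, $e_{k-j}\le \frac{k-j+1}{x}\,e_{k-j+1}$ for $j\ge 3$. The estimate then proceeds in three steps:
\begin{itemize}
\item bound $e_k$ from below by $\frac{x}{k}e_{k-1}+\frac1{2k}e_{k-2}$;
\item subtract $\frac12e_{k-1}+\frac18e_{k-2}+\frac38e_{k-3}$, using $\sum_{j\ge3}|a_j|=\frac38$ and monotonicity to bound $e_{k-j}\le e_{k-3}$ for $j\ge 3$;
\item replace $e_{k-3}$ by $\frac{k-2}{x}e_{k-2}$, and $e_{k-2}$ by $\frac{k-1}{x}e_{k-1}$ where its net coefficient is negative.
\end{itemize}
This reduces $c_k\ge0$ to an explicit rational inequality in $k$. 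I expect it to hold for all $k$ beyond a small threshold, say $k\ge 8$.

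The main obstacle is that the crude version of this estimate, which bounds all of $e_{k-2},e_{k-3},\dots$ by $e_{k-2}$ with total weight $\frac12$, just fails: it requires roughly $(k-3)(k-\frac32)\ge(k-1)^2$, which is false. Hence the finer split of the tail ($|a_2|=\frac18$ separately, with the remaining mass $\frac38$ pushed down one further index where the ratio $\frac{k-2}{x}<1$ helps) is essential. If this is still not quite enough, I would peel off $a_3=-\frac1{16}$ as well and use $\sum_{j\ge4}|a_j|=\frac5{16}$ with one more ratio factor. Finally, I would verify $c_2,\dots,c_{7}\ge0$ by direct computation from the coefficient formula; for instance, $c_2=[z^2]\sqrt{1-z}\,e^{z/2+z^2/4}=0$, consistent with $\sqrt{1-z}\,e^{z/2+z^2/4}=\exp\bigl(-\sum_{j\ge3}z^j/(2j)\bigr)$.
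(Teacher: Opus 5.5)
There is a genuine gap: the estimate you describe does not close, at any threshold.

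Your setup is correct. The identity $c_k=B_{k,0}(k-1)$, the values $c_0=1$ and $c_1=-1$, and the splitting $c_k=e_k-\sum_{j\ge1}|a_j|e_{k-j}$ all hold. So do the recursion $me_m=xe_{m-1}+\frac12e_{m-2}$ and the monotonicity $e_0\le\cdots\le e_{k-2}$. The problem is the final estimate. Carrying out your three steps literally, with $x=k-\frac32$, gives
\[
c_k\ \ge\ -\frac{(k-2)(17k-21)}{4k(2k-3)^2}\,e_{k-1},
\]
which is negative for every $k\ge3$. So there is no threshold such as $k\ge8$. Your fallback, peeling off $a_3=-\frac1{16}$ and pushing the mass $\frac5{16}$ onto $e_{k-4}$, yields $c_k\ge-\frac{(k-2)(k-3)(19k-21)}{4k(2k-3)^3}e_{k-1}$, which is negative for every $k\ge4$.

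The reason is structural. Since $\sum_{j\ge1}|a_j|=1$ and $e_{m-1}/e_m=1+O(1/k)$ for $m$ near $k$, everything of order $e_{k-1}$ cancels exactly, and the sign is decided at order $e_{k-1}/k$. At that order both of your approximations are too lossy:
\begin{enumerate}
\item The bound $e_{m-1}\le\frac{m}{x}e_m$ drops the term $\frac1{2x}e_{m-2}$, a relative error of about $\frac1{2k}$ per step.
\item The bound $e_{k-j}\le e_{k-J-1}$ throws away all decay of the tail.
\end{enumerate}
Even the exact value of $e_k-\frac12e_{k-1}-\frac18e_{k-2}-\frac38e_{k-3}$ is asymptotically $-\frac{5}{8k}e_{k-1}$. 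For example, at $k=10$ it is about $-52$, while $c_{10}\approx136$.

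Your approach can be rescued, but only by truncating deeper. Peel off $a_1,\dots,a_4$ (with $a_4=-\frac5{128}$) and push the remaining mass $\frac{35}{128}$ onto $e_{k-5}$. The same reductions then give
\[
c_k\ \ge\ \frac{(k-2)(k-3)(23k^2+221k-252)}{16k(2k-3)^4}\,e_{k-1}\ \ge\ 0\qquad(k\ge3),
\]
with a thin margin of about $\frac{23}{256k}e_{k-1}$.

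The paper avoids this near-cancellation altogether. It writes $C(u)=e^{-T(u)}G(T(u))$, where $\log G(w)=\frac{w^2}{2}+\sum_{j\ge3}\frac{w^j}{2j}$ has nonnegative coefficients and no linear term. Using $e^{-T}T^q=uT^{q-1}$, this gives $C(u)=e^{-T(u)}+\sum_{q\ge2}g_q\,uT(u)^{q-1}$ with all $g_q\ge0$. All negativity is then isolated in the explicit coefficients $-\frac{(k-1)^{k-1}}{k!}$ of $e^{-T(u)}$. The $q=2,3,4$ terms alone already dominate these for $k\ge4$, and $c_2=c_3=0$ are checked directly.
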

\begin{proof}
Define
\[G(w)=(1-w)^{-1/2}\exp\!\left(-\frac w2+\frac{w^2}{4}\right).\]
Then
\begin{align*}
\log G(w)=-\frac12\log(1-w)-\frac w2+\frac{w^2}{4}
=\frac12\sum_{j\ge 1}\frac{w^j}{j}-\frac w2+\frac{w^2}{4}
=\frac{w^2}{2}+\sum_{j\ge 3}\frac{w^j}{2j}.
\end{align*}
Thus \(\log G(w)\) has no constant or linear term, and all its coefficients are nonnegative.  It follows that
\[G(w)=\sum_{q\ge 0}g_q w^q\]
has
\[g_0=1,\qquad g_1=0, \qquad g_q\ge 0\quad(q\ge 2).
\]
Moreover, from the displayed expression for \(\log G(w)\),
\[g_2=\frac12,\qquad g_3=\frac16, \qquad g_4=\frac14.
\]
By the definition of \(C(u)\), $C(u)=e^{-T(u)}G(T(u))$. 
Since \(T(u)=ue^{T(u)}\), we have $e^{-T(u)}=\frac{u}{T(u)}$.
Therefore
\[e^{-T(u)}T(u)^q=uT(u)^{q-1}\qquad(q\ge 1),
\]
and hence
\begin{align}\label{eq:C-expanded}
C(u)=e^{-T(u)}+\sum_{q\ge 2}g_q\,uT(u)^{q-1}.
\end{align}

We shall use two standard coefficient evaluations. First, by
Lemma~\ref{lem:Lagrange}, for \(k\ge 1\),
\begin{align}
[u^k]e^{-T(u)}=[z^k](1-z)e^{(k-1)z}=\frac{(k-1)^k}{k!}-\frac{(k-1)^{k-1}}{(k-1)!} =-\frac{(k-1)^{k-1}}{k!}. \label{eq:e-minus-T-coeff}
\end{align}
Second, for integers \(N\ge a\ge 1\), ordinary Lagrange inversion \cite[Equation (2.2.1)]{Gessel16} gives
\begin{align}\label{eq:T-power-coeff}
[u^N]T(u)^a=\frac{a}{N}[z^{N-a}]e^{Nz}=\frac{a}{N}\frac{N^{N-a}}{(N-a)!}.
\end{align}

Let $C(u)=\sum_{k\ge 0}c_k u^k$. It is clear that \(c_0=1\).  Using \eqref{eq:C-expanded} and
\eqref{eq:e-minus-T-coeff}, the coefficient of \(u\) is $c_1=[u]e^{-T(u)}=-1$. 
For \(k=2\), only the terms \(e^{-T(u)}\) and \(g_2uT(u)\) contribute:
\[c_2=-\frac12+\frac12[u]T(u)=-\frac12+\frac12=0.
\]
For \(k=3\), only the terms \(e^{-T(u)}\), \(g_2uT(u)\), and
\(g_3uT(u)^2\) contribute:
\[c_3=-\frac{2^2}{3!}+\frac12[u^2]T(u)+\frac16[u^2]T(u)^2=-\frac23+\frac12+\frac16=0.
\]

Now let \(k\ge 4\).  Since \(g_q\ge 0\) for all \(q\ge 2\), and since
\(T(u)\) has nonnegative coefficients, all terms in \eqref{eq:C-expanded} with \(q\ge 5\) contribute nonnegatively to
\([u^k]C(u)\).  Therefore, using only the terms \(q=2,3,4\), we get the lower bound
\begin{align*}
c_k&\ge-\frac{(k-1)^{k-1}}{k!}+\frac12 [u^{k-1}]T(u)+\frac16 [u^{k-1}]T(u)^2+\frac14 [u^{k-1}]T(u)^3                                    \\ &=-\frac{(k-1)^{k-1}}{k!}+\frac12\frac{(k-1)^{k-2}}{(k-1)!}+\frac13\frac{(k-1)^{k-4}}{(k-3)!} +\frac34\frac{(k-1)^{k-5}}{(k-4)!}
\\ &= \frac{(k-1)^{k-5}}{k!}\cdot \frac{(k-3)(k-2)(k-1)(7k+2)}{12}.
\end{align*}
Therefore $c_k\ge 0$ for $k\ge 4$.
Thus $C(u)=1-u+P(u)$ for a power series \(P(u)\) with nonnegative coefficients.
\end{proof}

\subsection{Boundary nonnegativity}

We now combine the coefficient estimates for \(R(u)\) and \(C(u)\).

\begin{prop}\label{prop:boundary-nonnegative}
Let \(i,r\ge 0\) satisfy \(i+r\ge 2\).  Then $B_{i,r}(i+r-1)\ge 0$ 
except for the single pair \((i,r)=(1,1)\).  For this exceptional pair, $B_{1,1}(1)=-\frac{1}{4}$.
\end{prop}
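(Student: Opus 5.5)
By Lemma~\ref{lem:boundary-RC}, $B_{i,r}(i+r-1)=[u^i]R(u)^rC(u)$. By Lemma~\ref{lem:C-coefficients} we may write $C(u)=1-u+P(u)$ with $P$ having nonnegative coefficients. Then
\[
[u^i]R(u)^rC(u)=q^{(r)}_i-q^{(r)}_{i-1}+[u^i]R(u)^rP(u),
\]
with the convention $q^{(r)}_{-1}=0$. The last term is nonnegative, because $R$ has nonnegative coefficients by Lemma~\ref{lem:R-coefficients}. So it suffices to show $q^{(r)}_i\ge q^{(r)}_{i-1}$, or else to absorb a deficit into the $P$ term. I split into cases according to $r$.

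\emph{Case $r\ge 2$.} Lemma~\ref{lem:R-powers-monotone} gives $q^{(r)}_i\ge q^{(r)}_{i-1}$ for $i\ge1$. For $i=0$ the difference is $q^{(r)}_0=1$. Hence $B_{i,r}(i+r-1)\ge0$ for all $i$.

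\emph{Case $r=0$.} Here $B_{i,0}(i-1)=c_i$, and $i\ge2$ by the hypothesis $i+r\ge2$. Lemma~\ref{lem:C-coefficients} gives $c_i\ge0$ for $i\ge2$.

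\emph{Case $r=1$.} Now $i\ge1$, and the quantity is $[u^i]R(u)C(u)=\rho_i-\rho_{i-1}+[u^i]R(u)P(u)$. For $i\ge2$, Lemma~\ref{lem:R-coefficients} gives $\rho_i\ge\rho_{i-1}$, so the value is nonnegative. For $i=1$ we have $P$ with zero constant and linear terms, so $[u]RP=0$. Since $c_0=1$, $c_1=-1$, $\rho_0=1$, $\rho_1=3/4$, we get
\[
[u]R(u)C(u)=\rho_1c_0+\rho_0c_1=\tfrac34-1=-\tfrac14.
\]
This is exactly the exceptional value $B_{1,1}(1)=-\tfrac14$.

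Once the decomposition $C=1-u+P$ is in place, the proof reduces to the earlier monotonicity lemmas. The only genuinely delicate point was the monotonicity of $R^r$ for $r\ge2$, which is already handled by Lemma~\ref{lem:R-powers-monotone}. The remaining care is in the $r=1$ case: $\rho_1<\rho_0$ breaks monotonicity only at $i=1$, which is precisely the exception. The argument is a short assembly of Lemmas~\ref{lem:boundary-RC}, \ref{lem:R-coefficients}, \ref{lem:R-powers-monotone} and \ref{lem:C-coefficients}.
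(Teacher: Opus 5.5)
Your proposal is correct and follows the paper's proof essentially line for line. It uses the same decomposition $C(u)=1-u+P(u)$, the same identity $[u^i]R(u)^rC(u)=q^{(r)}_i-q^{(r)}_{i-1}+[u^i]R(u)^rP(u)$, and the same case split $r=0$, $r=1$, $r\ge 2$ via Lemmas~\ref{lem:R-coefficients}, \ref{lem:R-powers-monotone} and \ref{lem:C-coefficients}; your explicit handling of $i=0$ for $r\ge 2$ only spells out what the paper leaves to the convention $q^{(r)}_{-1}=0$.
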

\begin{proof}
By Lemma~\ref{lem:boundary-RC}, $B_{i,r}(i+r-1)=[u^i]R(u)^rC(u)$. 
By Lemma~\ref{lem:C-coefficients}, $C(u)=1-u+P(u)$, 
where \(P(u)\) has nonnegative coefficients.  Write
\[R(u)^r=\sum_{k\ge 0}q_k^{(r)}u^k,
\]
with the convention \(q_{-1}^{(r)}=0\).  Since \(R(u)\) has nonnegative coefficients by Lemma~\ref{lem:R-coefficients}, the product
\(R(u)^rP(u)\) has nonnegative coefficients.  Therefore
\begin{align}\label{eq:boundary-main-ineq}
[u^i]R(u)^rC(u)=q_i^{(r)}-q_{i-1}^{(r)}+[u^i]R(u)^rP(u).
\end{align}
We consider three cases.
First, suppose \(r=0\).  Then \(R(u)^r=1\).  Since \(i+r\ge 2\), we
have \(i\ge 2\).  Hence, by Lemma~\ref{lem:C-coefficients},
\[[u^i]R(u)^rC(u)=[u^i]C(u)\ge 0.\]

Second, suppose \(r=1\).  Then \(q_k^{(1)}=\rho_k\), where $R(u)=\sum_{k\ge 0}\rho_k u^k$.
If \(i=1\), then by Lemma~\ref{lem:R-coefficients}, $q_1^{(1)}-q_0^{(1)}=\rho_1-\rho_0=\frac{3}{4}-1=-\frac{1}{4}$.
Moreover, \(P(u)\) has no constant or linear term, so $[u]R(u)P(u)=0$.
Thus $[u]R(u)C(u)=-\frac{1}{4}$.
This is exactly the exceptional pair \((i,r)=(1,1)\).

If instead \(r=1\) and \(i\ge 2\), then Lemma~\ref{lem:R-coefficients}
gives $\rho_i-\rho_{i-1}\ge 0$. 
The final term in \eqref{eq:boundary-main-ineq} is also nonnegative, so $[u^i]R(u)C(u)\ge 0$.

Third, suppose \(r\ge 2\).  By Lemma~\ref{lem:R-powers-monotone}, $q_i^{(r)}-q_{i-1}^{(r)}\ge 0$.
The term \([u^i]R(u)^rP(u)\) in \eqref{eq:boundary-main-ineq} is
nonnegative.  Hence $[u^i]R(u)^rC(u)\ge 0$. 

The proposition follows.
\end{proof}

\section{Proof of the main theorem}\label{Section-44}

We now extend the boundary inequalities from \(n=i+r-1\) to the entire range \(n\ge i+r-1\).

\begin{prop}\label{prop:B-nonnegative}
Let \(i,r\ge 0\) satisfy \(i+r\ge 2\), and suppose $(i,r)\ne (1,1)$. Then
\[B_{i,r}(n)\ge 0 \qquad \text{for every }n\ge i+r-1.\]
In addition, $B_{1,1}(n)=n-\frac54$, so in particular $B_{1,1}(n)\ge 0$ for every $n\ge 2$.
\end{prop}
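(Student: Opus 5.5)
The plan is induction on \(i\), using Lemma~\ref{lem:derivative} to propagate the boundary inequalities of Proposition~\ref{prop:boundary-nonnegative} from \(n=i+r-1\) to all larger \(n\). Fix \(r\ge 0\). By Lemma~\ref{lem:derivative}, \(B_{i,r}(n)\) is a polynomial in \(n\) (of degree \(i\)) satisfying \(\partial_n B_{i,r}=B_{i-1,r}\). For \(n\ge i+r-1\) we therefore have
\[
B_{i,r}(n)=B_{i,r}(i+r-1)+\int_{i+r-1}^{n}B_{i-1,r}(s)\,ds .
\]
The first term is nonnegative by Proposition~\ref{prop:boundary-nonnegative} whenever \(i+r\ge 2\) and \((i,r)\ne(1,1)\). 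It remains to show the integrand is nonnegative on \([i+r-1,\infty)\). Since \(i+r-1\ge (i-1)+r-1\), this follows if \(B_{i-1,r}\) is nonnegative on \([(i-1)+r-1,\infty)\), which is the inductive hypothesis for the pair \((i-1,r)\). Here \(n\) is treated as a real variable, which is legitimate because \(B_{i,r}\) is a polynomial in \(n\).

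The main obstacle is the bookkeeping of base cases, since the inductive hypothesis is not available for every pair \((i-1,r)\).

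\begin{itemize}
\item If \(i=0\), then \(B_{0,r}=1\ge0\).
\item If \(i=1\), the integrand is \(B_{0,r}=1\ge 0\), so \(B_{1,r}(n)\ge B_{1,r}(r)\ge 0\) for \(r\ge 2\), using the boundary proposition with \(i+r=r+1\ge 2\).
\item If \(i\ge 2\) and \((i-1)+r\ge 2\), the pair \((i-1,r)\) is covered by induction unless \((i-1,r)=(1,1)\), that is, \((i,r)=(2,1)\).
\item If \((i-1)+r<2\), then \(r=0\) and \(i=2\), and the integrand is \(B_{1,0}\), which is not covered by the statement.
\end{itemize}

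So two cases need direct computation.

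\begin{itemize}
\item For \(B_{1,r}\) in general: since \(\sqrt{1-z}=1-z/2+\cdots\), \((1-z/4)^r=1-rz/4+\cdots\) and \(\exp((n-\tfrac12)z+z^2/4)=1+(n-\tfrac12)z+\cdots\), we get \(B_{1,r}(n)=n-1-r/4\). This gives \(B_{1,1}(n)=n-\tfrac54\), which is nonnegative for \(n\ge2\), as claimed.
\item For \(r=0\): \(B_{1,0}(s)=s-1\ge0\) for \(s\ge1\). This covers the integrand for \((i,r)=(2,0)\) on \([1,\infty)\).
\item For \((i,r)=(2,1)\): the integrand \(B_{1,1}(s)=s-\tfrac54\) is nonnegative on the integration range \([2,\infty)\), since \(i+r-1=2\).
\end{itemize}

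Consequently, every pair \((i,r)\) with \(i\ge 2\) and \((i,r)\ne(1,1)\) has \(B_{i-1,r}\) nonnegative on \([i+r-1,\infty)\), either by induction or by these explicit checks.

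To organize the argument cleanly, I will prove the stronger auxiliary statement: \(B_{i,r}(n)\ge0\) for all real \(n\ge \max(i+r-1,\,1)\), for all pairs except \((1,1)\), where the range is instead \(n\ge 2\). The proof is by induction on \(i\) with \(r\) fixed, using the integral identity above in each step. The explicit formula \(B_{1,r}(n)=n-1-r/4\) handles the \(i=1\) layer directly, including the exceptional pair.
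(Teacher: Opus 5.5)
Your proposal is correct and matches the paper's proof: induction on $i$ via the integral identity from Lemma~\ref{lem:derivative}, with boundary values from Proposition~\ref{prop:boundary-nonnegative}, and the two problematic predecessors $(1,1)$ and $(1,0)$ (for $(i,r)=(2,1)$ and $(2,0)$) checked directly. Your general formula $B_{1,r}(n)=n-1-r/4$ and your auxiliary statement with range $n\ge\max(i+r-1,1)$ only repackage the same case analysis a little more tidily.
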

\begin{proof}
First observe directly from \eqref{eq:B-definition} that
\[B_{1,0}(n)=[z]\sqrt{1-z}\,\exp\!\left(\left(n-\frac12\right)z+\frac{z^2}{4}\right)=n-1,
\]
and
\[B_{1,1}(n)=[z](1-z/4)\sqrt{1-z}\,\exp\!\left(\left(n-\frac12\right)z+\frac{z^2}{4}\right)=n-\frac54.
\]
Thus
\[B_{1,0}(n)\ge 0\quad(n\ge 1),\qquad B_{1,1}(n)\ge 0\quad(n\ge 2).
\]
We prove the main assertion by induction on \(i\), simultaneously for
all \(r\ge 0\).  If \(i=0\), then Lemma~\ref{lem:derivative} gives $B_{0,r}(n)=1$, so the assertion is immediate.

Assume \(i\ge 1\), and suppose the proposition has already been proved for all smaller first indices.  Set $N=i+r-1$.
By Proposition~\ref{prop:boundary-nonnegative}, $B_{i,r}(N)\ge 0$, because \(i+r\ge 2\) and \((i,r)\ne(1,1)\).

We claim that $B_{i-1,r}(x)\ge 0$ for all $x\ge N$. There are three possibilities.

If
\[(i-1)+r\ge 2\quad\text{and} \quad(i-1,r)\ne (1,1),
\]
then the induction hypothesis gives
\[B_{i-1,r}(x)\ge 0\qquad\text{for all }x\ge (i-1)+r-1=N-1.
\]
In particular, this holds for all \(x\ge N\).
If \((i-1,r)=(1,1)\), then \((i,r)=(2,1)\) and \(N=2\).  In this case
the direct computation above gives
\[B_{i-1,r}(x)=B_{1,1}(x)=x-\frac54\ge 0\qquad\text{for all }x\ge 2=N.
\]

The only remaining possibility is that $(i-1)+r<2$. Since \(i+r\ge 2\) and \(i\ge 1\), this can only occur for
\((i,r)=(2,0)\) or \((i,r)=(1,1)\).  The latter is excluded.  Hence \((i,r)=(2,0)\), so \(N=1\), and the direct computation gives
\[B_{i-1,r}(x)=B_{1,0}(x)=x-1\ge 0 \qquad\text{for all }x\ge 1=N.
\]
Thus in every case,
\[B_{i-1,r}(x)\ge 0 \qquad\text{for all }x\ge N.
\]

Using Lemma~\ref{lem:derivative}, for \(n\ge N\) we have
\[B_{i,r}(n)=B_{i,r}(N)+\int_N^n\frac{\partial}{\partial x}B_{i,r}(x)\,dx=B_{i,r}(N)+\int_N^n B_{i-1,r}(x)\,dx.
\]
Both terms on the right-hand side are nonnegative.  Therefore
\[B_{i,r}(n)\ge 0\qquad\text{for all }n\ge N=i+r-1.
\]
This completes the induction.
\end{proof}

\begin{proof}[Proof of Theorem~\ref{thm:main}]
First consider \(m=1\).  Then $\mathcal{P}(1,n)=[0,n]$, so $E_{1,n}(t)=nt+1$.
In the magic basis \(\{1+t,t\}\), $nt+1=(1+t)+(n-1)t$.
Since \(n\in\Z_{>0}\), both coefficients are nonnegative.  Thus \(E_{1,n}(t)\) is magic positive.

Now assume \(m\ge 2\).  By Lemma~\ref{lem:magic-coefficients}, the coefficient of \(t^i(1+t)^{m-i}\) in \(E_{m,n}(t)\) is
\[\mu_{m,n,i}=\frac{m!}{r!}B_{i,r}(n),\qquad r=m-i.
\]
The scalar \(m!/r!\) is positive.  Therefore it suffices to prove $B_{i,r}(n)\ge 0$ for every \(0\le i\le m\), where \(r=m-i\).

Since \(i+r=m\), the hypothesis \(n\ge m-1\) is exactly $n\ge i+r-1$.
If \((i,r)\ne(1,1)\), then Proposition~\ref{prop:B-nonnegative} gives $B_{i,r}(n)\ge 0$.
The only remaining index pair is \((i,r)=(1,1)\), which can occur only when \(m=2\).  For this pair, $B_{1,1}(n)=n-\frac{5}{4}$.
Thus
\[B_{1,1}(n)\ge 0\qquad\text{whenever } n\ge 2.
\]
Consequently, if \((m,n)\ne(2,1)\), every magic coefficient of \(E_{m,n}(t)\) is nonnegative.  This proves magic positivity for all
non-exceptional cases.

It remains to verify that \((m,n)=(2,1)\) is genuinely exceptional.
For \(m=2\), the middle magic coefficient corresponds to \(i=1\) and
\(r=1\).  Hence
\[\mu_{2,1,1}=\frac{2!}{1!}B_{1,1}(1)=2\left(1-\frac{5}{4}\right)=-\frac{1}{2}.
\]
Therefore \(E_{2,1}(t)\) is not magic positive.

This completes the proof.
\end{proof}

\begin{rem}
The proof uses the hypothesis \(n\ge m-1\) in two essential places.
First, this is the range in which the coefficient-extraction formula \eqref{eq:Behrend-formula} is available.  Second, after writing
\(r=m-i\), the lower endpoint of the allowed range is exactly $n=m-1=i+r-1$,
which is the boundary point at which the coefficient estimates above are
proved.
\end{rem}

\section{The Cases $n<m-1$}\label{Section-55}

This section studies the complementary range \(n<m-1\) for small \(n\).  We prove that \(i(\mathcal{P}(m,1),t)\) is not magic positive for all \(m\ge3\), that \(i(\mathcal{P}(m,2),t)\) is not magic positive for all \(m\ge4\), and that, in the range \(n=3<m-1\), the only magic positive cases are \(\mathcal{P}(5,3)\) and \(\mathcal{P}(6,3)\).

\begin{prop}{\em \cite[Section 5.3]{Behedrn2025}}\label{Proposition-123E}
For any positive integer $m$, the Ehrhart polynomials of $\mathcal{P}(m,1)$, $\mathcal{P}(m,2)$, and $\mathcal{P}(m,3)$ are respectively as follows.
\begin{align*}
E_{m,1}(t)&=\binom{t+m}{m},
\\ E_{m,2}(t)&=\binom{3t+m}{m}-m\binom{t+m-1}{m},
\\ E_{m,3}(t)&=\binom{6t+m}{m}-m\binom{3t+m-1}{m}-\binom{m}{2}\left(\binom{t+m-1}{m}+(m-2)\binom{t+m-2}{m}\right).
\end{align*}
\end{prop}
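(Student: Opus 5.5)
The plan is a direct lattice-point count from an inequality description of $\mathcal{P}(m,n)$. By the work of Heuer and Striker \cite{Heuer21,Heuer22}, $\mathcal{P}(m,n)$ is a polymatroid. It is cut out by $x_i\ge 0$ together with
\[
\sum_{i\in S}x_i\le f(|S|),\qquad f(k)=\sum_{j=0}^{k-1}\max(n-j,0),
\]
for every nonempty $S\subseteq[m]$. Hence $t\mathcal{P}(m,n)\cap\Z^m$ is the set of nonnegative integer vectors satisfying $\sum_{i\in S}x_i\le t f(|S|)$. I will count these points for $n=1,2,3$ by inclusion--exclusion on the violated constraints, starting from the simplex $\{x\ge0,\ \sum x_i\le t f(m)\}$. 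Throughout I use the standard count
\[
\#\{y\in\Z_{\ge0}^m:\textstyle\sum y_i\le s\}=\binom{s+m}{m},
\]
and the hockey-stick identity.

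For $n=1$ we have $f(k)=1$ for all $k$, so the polytope is the standard simplex and the count is $\binom{t+m}{m}$.

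For $n=2$ we have $f(1)=2$ and $f(k)=3$ for $k\ge2$. The pair constraints are implied by $\sum x_i\le 3t$, so the dilated polytope is $\{x\ge0,\ \sum x_i\le3t,\ x_i\le2t\}$. Two coordinates cannot both exceed $2t$, since that would give a sum of at least $4t+2>3t$. So the bad sets $\{x_i\ge 2t+1\}$ are pairwise disjoint. Shifting $y_i=x_i-2t-1$ shows each bad set has $\binom{t-1+m}{m}$ points, and subtracting $m$ of them gives the stated formula.

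For $n=3$ we have $f(1)=3$, $f(2)=5$ and $f(k)=6$ for $k\ge3$, so the constraints are $\sum x_i\le 6t$, $x_i\le3t$ and $x_i+x_j\le5t$. The single-coordinate violations $A_i=\{x_i\ge3t+1\}$ are pairwise disjoint, and each contributes $\binom{3t-1+m}{m}$. Next I check that a pair violation $x_i+x_j\ge5t+1$ with both $x_i,x_j\le3t$ occurs for at most one pair:
\begin{itemize}
\item two pair violations sharing an index $i$ force $2x_i\ge 10t+2-(x_j+x_k)\ge 4t+2$, so $x_i\ge 2t+1$ and the sum bound then pushes $x_i$ above $3t$, i.e.\ into some $A_i$;
\item two disjoint pair violations would exceed the total $6t$.
\end{itemize}
So it remains to count, for each pair $\{i,j\}$, the points with $x_i+x_j=s\ge5t+1$, $x_i,x_j\le3t$ and no other violation. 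Writing $u=6t-s\in[0,t-1]$, there are $u+1$ choices of $(x_i,x_j)$ and $\binom{u+m-2}{m-2}$ choices for the remaining coordinates. This gives
\[
\binom m2\sum_{u=0}^{t-1}(u+1)\binom{u+m-2}{m-2}.
\]
To sum it, I write $(u+1)=(u+m-1)-(m-2)$ and use $(u+m-1)\binom{u+m-2}{m-2}=(m-1)\binom{u+m-1}{m-1}$. Hockey-stick then turns the sum into $(m-1)\binom{t+m-1}{m}-(m-2)\binom{t+m-2}{m-1}$. One application of Pascal's rule rewrites this as $\binom{t+m-1}{m}+(m-2)\binom{t+m-2}{m}$, which is the formula claimed.

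The main obstacle is the $n=3$ bookkeeping. I must make sure the overlap analysis is airtight, so that no point is subtracted twice. In particular, points in $A_i$ that also violate a pair constraint have to be excluded from the pair count, and that count must be taken with $x_i,x_j\le 3t$ imposed explicitly. The inequality description itself I take from Heuer--Striker.
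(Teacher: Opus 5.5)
Your argument is correct. It differs from the paper simply because the paper gives no argument here: it imports the three formulas from \cite[Section 5.3]{Behedrn2025}.

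You instead derive them from the description $\{x\ge 0:\ \sum_{i\in S}x_i\le t\,f(|S|)\}$ by inclusion--exclusion over violated constraints, and the bookkeeping checks out:
\begin{itemize}
\item the single-coordinate violations are pairwise disjoint, and each contributes $\binom{t-1+m}{m}$ (for $n=2$) or $\binom{3t-1+m}{m}$ (for $n=3$);
\item a point with all coordinates at most $3t$ violates at most one pair inequality;
\item the pair count $(u+1)\binom{u+m-2}{m-2}$ is right;
\item the hockey-stick and Pascal steps give exactly $\binom{t+m-1}{m}+(m-2)\binom{t+m-2}{m}$.
\end{itemize}

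What your route buys is a self-contained section. The only external input is the polymatroid description, and you could verify it directly: every generating vector satisfies the inequalities, and the greedy vertices of the polymatroid with rank function $S\mapsto f(|S|)$ are vectors with entries $n,n-1,\dots,n-k+1$ in distinct positions, which are themselves generating vectors. Your argument also shows why $n\le 3$ is special: the violated constraints have a rigid overlap pattern. For larger $n$, non-redundant constraints on subsets of size $3,\dots,n-1$ enter and this bookkeeping would not extend cheaply. The citation, of course, buys brevity.

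Two places to tighten:
\begin{itemize}
\item In the overlapping-pair step, subtract $x_i+x_j+x_k\le 6t$ from $2x_i+x_j+x_k\ge 10t+2$. This gives $x_i\ge 4t+2>3t$ at once, so the intermediate bound $x_i\ge 2t+1$ is unnecessary.
\item For small $m$ ($m=1$ when $n=2$, and $m\le 2$ when $n=3$), the total-sum inequality is not literally one of the defining constraints. It is implied by them, so starting from the simplex remains legitimate, but you should say so.
\end{itemize}
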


\begin{lem}\label{lem:first-magic-coeff}
Let $f(t)=1+e_1t+e_2t^2+\cdots+e_m t^m$. If $f(t)=\sum_{k=0}^{m} a_k t^k(1+t)^{m-k}$, then
$a_0=1$, $a_1=e_1-m$.
\end{lem}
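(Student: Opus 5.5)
The plan is to compare the coefficients of $t^0$ and $t^1$ on both sides of the identity $f(t)=\sum_{k=0}^{m} a_k t^k(1+t)^{m-k}$. Only the terms with $k=0$ and $k=1$ can contribute to these low-order coefficients, so the computation is short.

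First, I expand each basis element. For every $k\ge 0$ we have
\[
t^k(1+t)^{m-k}=t^k+(m-k)t^{k+1}+\cdots .
\]
Hence $t^k(1+t)^{m-k}$ has lowest-degree term $t^k$. This shows that only $k=0$ contributes to the constant term, and only $k=0,1$ contribute to the coefficient of $t$.

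Next, I compare the constant terms. On the left the constant term is $1$, and on the right it is $a_0$, so $a_0=1$.

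Then I compare the coefficients of $t$. The term $k=0$ contributes $a_0\binom{m}{1}=m$, and the term $k=1$ contributes $a_1\cdot 1$. Therefore
\[
e_1=m+a_1,
\]
which gives $a_1=e_1-m$.

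Alternatively, I could use the magic transform $\mathsf{M}_m f(y)=(1-y)^m f(y/(1-y))$ from Section~\ref{Section-22} and read off the coefficients of $y^0$ and $y^1$. Since $y/(1-y)=y+O(y^2)$, we get
\[
\mathsf{M}_m f(y)=(1-my+O(y^2))(1+e_1y+O(y^2))=1+(e_1-m)y+O(y^2),
\]
which gives the same two values.

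There is no real obstacle here; the only point requiring care is confirming that the basis terms with $k\ge 2$ do not affect the coefficients of degree at most $1$. This follows immediately from their lowest-degree term being $t^k$.
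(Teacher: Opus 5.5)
Your proposal is correct and follows essentially the same approach as the paper: compare the constant terms to get $a_0=1$, then compare the coefficients of $t$ to get $e_1=ma_0+a_1$. The alternative argument via $\mathsf{M}_m f$ is also valid, but it is just a restatement of the same coefficient comparison.
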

\begin{proof}
The constant term of \(f(t)\) is \(1\). Therefore, $a_0=1$.
Observing the coefficient of the linear term of $f(t)$, we obtain $e_1=m a_0+a_1=m+a_1$.
\end{proof}

We shall also need a closed formula for the magic transform of a binomial polynomial.

\begin{lem}\label{lem:Phi}
Let
\[\Phi_m(\alpha,\beta;y)=(1-y)^m\binom{\alpha \frac{y}{1-y}+\beta}{m}.
\]
Then
\[\Phi_m(\alpha,\beta;y)=\frac1{m!}\prod_{j=0}^{m-1}\left(\beta-j+(\alpha-\beta+j)y\right).
\]
\end{lem}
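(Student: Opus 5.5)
The plan is a direct algebraic computation. Substitute \(t=\frac{y}{1-y}\) into the falling-factorial form of the binomial coefficient. For every integer \(m\ge 0\) and every indeterminate \(x\) we have
\[\binom{x}{m}=\frac{1}{m!}\prod_{j=0}^{m-1}(x-j),\]
and we apply this with \(x=\alpha\frac{y}{1-y}+\beta\).

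The key step is to distribute the factor \((1-y)^m\) over the \(m\) linear factors, one copy of \((1-y)\) per factor. For each \(j\) this gives
\[(1-y)\left(\alpha\frac{y}{1-y}+\beta-j\right)=\alpha y+(\beta-j)(1-y)=(\beta-j)+(\alpha-\beta+j)y .\]
Multiplying these \(m\) identities and dividing by \(m!\) yields
\[\Phi_m(\alpha,\beta;y)=\frac{1}{m!}\prod_{j=0}^{m-1}\bigl(\beta-j+(\alpha-\beta+j)y\bigr),\]
which is the claim. The case \(m=0\) is trivial, since both sides equal \(1\) (empty product).

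I do not expect a genuine obstacle. The only point to check is the interpretation: the identity is meant as an identity of rational functions in \(y\), hence of polynomials, since the right-hand side is visibly a polynomial. So the computation is valid without any restriction on \(y\). It also holds for arbitrary (real or symbolic) \(\alpha,\beta\), because only the polynomial definition of \(\binom{x}{m}\) is used.

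This form then feeds directly into \(\mathsf{M}_m\) applied to the formulas of Proposition~\ref{Proposition-123E}. For example, \(\binom{t+m}{m}\) corresponds to \((\alpha,\beta)=(1,m)\), and \(\binom{t+m-1}{m}\) to \((1,m-1)\). Since \(\mathsf{M}_m\) is linear, the magic coefficients are obtained by expanding these products.
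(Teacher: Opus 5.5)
Your proposal is correct and matches the paper's proof: both expand $\binom{x}{m}$ as a falling factorial with $x=\alpha\frac{y}{1-y}+\beta$, then absorb one factor of $(1-y)$ into each linear term to get $(\beta-j)+(\alpha-\beta+j)y$.
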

\begin{proof}
By definition, $\binom{x}{m}=\frac1{m!}\prod_{j=0}^{m-1}(x-j)$.
Taking $x=\alpha\frac{y}{1-y}+\beta$ gives
\[\Phi_m(\alpha,\beta;y)=\frac{(1-y)^m}{m!}\prod_{j=0}^{m-1}\left(\alpha\frac{y}{1-y}+\beta-j\right)
=\frac1{m!}\prod_{j=0}^{m-1}\left(\beta-j+(\alpha-\beta+j)y\right).
\]
This completes the proof.
\end{proof}

For later convenience, define
\[P_{\gamma,q}(y)=\prod_{s=1}^{q}\left(1+\left(\frac{\gamma}{s}-1\right)y\right),\qquad q\ge0,
\]
where the empty product is \(1\).  Its first coefficient is
\[[y]P_{\gamma,q}(y)=\sum_{s=1}^{q}\left(\frac{\gamma}{s}-1\right)=\gamma H_q-q,
\]
where
\[H_q=1+\frac12+\cdots+\frac1q
\]
is the \(q\)-th \emph{harmonic number}, and \(H_0:=0\).

\subsection{The case \(n=1\)}

When \(n=1\), the polytope $\mathcal{P}(m,1)$ is the standard simplex.

\begin{prop}\label{prop:n1}
For \(m\ge2\), the coefficient of \(y\) in the degree \(m\) magic transform of $E_{m,1}(y)$ (i.e., $[y]\mathsf{M}_m E_{m,1}(y)$) is
\[H_m-m.\]
Consequently, \(E_{m,1}(t)\) is not magic positive for every \(m\ge2\).
\end{prop}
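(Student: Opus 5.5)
The plan is to compute $\mathsf{M}_m E_{m,1}(y)$ in closed form using Lemma~\ref{lem:Phi} and then read off the coefficient of $y$. Since $E_{m,1}(t)=\binom{t+m}{m}$ by Proposition~\ref{Proposition-123E}, we have
\[\mathsf{M}_m E_{m,1}(y)=(1-y)^m\binom{\frac{y}{1-y}+m}{m}=\Phi_m(1,m;y).\]
Lemma~\ref{lem:Phi} with $\alpha=1$ and $\beta=m$ then gives
\[\Phi_m(1,m;y)=\frac{1}{m!}\prod_{j=0}^{m-1}\bigl(m-j+(1-m+j)y\bigr).\]

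Next I reindex with $s=m-j$, which runs from $1$ to $m$. The $j$-th factor becomes $s+(1-s)y=s\bigl(1+(\tfrac1s-1)y\bigr)$. The scalars $s$ multiply to $m!$, which cancels the prefactor, so
\[\mathsf{M}_m E_{m,1}(y)=P_{1,m}(y).\]
The coefficient of $y$ is therefore $[y]P_{1,m}(y)=H_m-m$, by the formula for $[y]P_{\gamma,q}$ already recorded in the text. As a cross-check, Lemma~\ref{lem:first-magic-coeff} gives $a_1=e_1-m$, and the linear coefficient of $\binom{t+m}{m}$ is $e_1=\sum_{s=1}^m 1/s=H_m$, which agrees.

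Finally, for $m\ge 2$ each term $1/s$ with $2\le s\le m$ is strictly less than $1$, so $H_m<m$. Hence this magic coefficient is negative, and $E_{m,1}(t)$ is not magic positive.

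There is no real obstacle here; the only care needed is the reindexing, so that the factor $m!$ cancels exactly and the product matches $P_{1,m}$.
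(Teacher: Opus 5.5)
Your proposal is correct and follows the paper's proof essentially step for step: you apply Lemma~\ref{lem:Phi} with \(\alpha=1\), \(\beta=m\), reindex by \(s=m-j\) so that the \(m!\) cancels and the product becomes \(P_{1,m}(y)\), read off \(H_m-m\), and conclude from \(H_m<m\) for \(m\ge 2\). Your cross-check via Lemma~\ref{lem:first-magic-coeff}, using that the linear coefficient of \(\binom{t+m}{m}\) is \(H_m\), is a valid independent confirmation, though the paper does not need it.
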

\begin{proof}
By Proposition \ref{Proposition-123E} and Lemma~\ref{lem:Phi} with \(\alpha=1\) and \(\beta=m\), we get
\[\mathsf{M}_m E_{m,1}(y) = \frac1{m!} \prod_{j=0}^{m-1} \left( m-j+(1-m+j)y\right).
\]
Letting \(s=m-j\), so that \(s=1,\ldots,m\), this becomes
\[\mathsf{M}_m E_{m,1}(y)=\frac1{m!}\prod_{s=1}^{m}\left(s+(1-s)y\right)=\prod_{s=1}^{m}\left(1+\left(\frac1s-1\right)y\right)
=P_{1,m}(y).
\]
Therefore $[y]\mathsf{M}_m E_{m,1}(y)=H_m-m$.
For \(m\ge2\),
\[H_m=1+\sum_{s=2}^{m}\frac1s<1+\sum_{s=2}^{m}1=m.
\]
Hence $H_m-m<0$. 
Thus the magic transform has a negative coefficient, and \(E_{m,1}(t)\) is not magic positive.
\end{proof}

\subsection{The case \(n=2\)}

We next treat \(n=2\).  In the range \(n<m-1\), this means \(m\ge4\).
We now compute its magic transform.

\begin{prop}\label{prop:M-n2}
For \(m\ge4\), we have $\mathsf{M}_m E_{m,2}(y)=P_{3,m}(y)-yP_{1,m-1}(y)$.
Consequently, we obtain 
$$[y]\mathsf{M}_m E_{m,2}(y) = 3H_m-m-1.$$
\end{prop}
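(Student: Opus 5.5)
The plan is to compute the degree-$m$ magic transform term by term. By Proposition~\ref{Proposition-123E} we have $E_{m,2}(t)=\binom{3t+m}{m}-m\binom{t+m-1}{m}$, and $\mathsf{M}_m$ is linear. So
\[
\mathsf{M}_m E_{m,2}(y)=\Phi_m(3,m;y)-m\,\Phi_m(1,m-1;y),
\]
and each piece can be evaluated in closed form with Lemma~\ref{lem:Phi}. The proof then reduces to reindexing the two products so that they match the definition of $P_{\gamma,q}$.

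For the first term, take $\alpha=3$ and $\beta=m$ in Lemma~\ref{lem:Phi}. This gives
\[
\frac{1}{m!}\prod_{j=0}^{m-1}\bigl(m-j+(3-m+j)y\bigr).
\]
Substitute $s=m-j$, so that $s$ runs over $1,\dots,m$. The $j$-th factor becomes $s+(3-s)y=s\bigl(1+(\tfrac{3}{s}-1)y\bigr)$. The scalar factors multiply to $m!$, which cancels the prefactor $1/m!$. What remains is exactly $P_{3,m}(y)$. This is the same manipulation as in the proof of Proposition~\ref{prop:n1}, with $\gamma=3$ in place of $\gamma=1$.

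For the second term, take $\alpha=1$ and $\beta=m-1$. This gives
\[
\frac{m}{m!}\prod_{j=0}^{m-1}\bigl(m-1-j+(2-m+j)y\bigr).
\]
Now substitute $s=m-1-j$, so that $s$ runs over $0,\dots,m-1$ and the factor is $s+(1-s)y$. The key observation is that the factor with $s=0$ is exactly $y$. The remaining factors, for $s=1,\dots,m-1$, multiply to $(m-1)!\,P_{1,m-1}(y)$. Since $\frac{m}{m!}(m-1)!=1$, the second term equals $y\,P_{1,m-1}(y)$, and subtracting the two terms yields the claimed identity $\mathsf{M}_m E_{m,2}(y)=P_{3,m}(y)-yP_{1,m-1}(y)$.

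To read off the linear coefficient, use $[y]P_{\gamma,q}(y)=\gamma H_q-q$, which gives $[y]P_{3,m}(y)=3H_m-m$. For the other term, $[y]\bigl(yP_{1,m-1}(y)\bigr)=[y^0]P_{1,m-1}(y)=1$, because every factor of $P_{1,m-1}$ has constant term $1$. Hence $[y]\mathsf{M}_m E_{m,2}(y)=3H_m-m-1$.

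I do not expect any real obstacle. The one point needing care is the index shift in the second product: the $s=0$ factor must be isolated as $y$, and the factorials must be checked to cancel exactly. The hypothesis $m\ge4$ is not used in the algebra; it only records the range $n=2<m-1$ of interest.
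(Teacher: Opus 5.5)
Your proof is correct and follows the paper's argument essentially verbatim: linearity plus Lemma~\ref{lem:Phi} give $\Phi_m(3,m;y)-m\,\Phi_m(1,m-1;y)$, the substitution $s=m-j$ produces $P_{3,m}(y)$, the factor $y$ is isolated in the second product (your $s=0$, the paper's $j=m-1$), and $[y]P_{3,m}(y)=3H_m-m$ gives the linear coefficient. Your remark that $m\ge 4$ plays no role in the algebra is also accurate.
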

\begin{proof}
By Proposition \ref{Proposition-123E} and Lemma~\ref{lem:Phi},
\[\mathsf{M}_m E_{m,2}(y) = \Phi_m(3,m;y) - m\Phi_m(1,m-1;y).
\]
The first term is
\[\Phi_m(3,m;y)=\frac1{m!}\prod_{j=0}^{m-1}\left( m-j+(3-m+j)y\right).
\]
Setting \(s=m-j\) gives
\[\Phi_m(3,m;y) = \prod_{s=1}^{m} \left( 1+\left(\frac3s-1\right)y \right) = P_{3,m}(y).
\]
For the second term, Lemma~\ref{lem:Phi} gives
\[\Phi_m(1,m-1;y)=\frac1{m!}\prod_{j=0}^{m-1}\left(m-1-j+(1-m+1+j)y\right).
\]
The factor with \(j=m-1\) is \(y\).  The remaining factors correspond to \(s=1,\ldots,m-1\) and give
\[\Phi_m(1,m-1;y)=\frac{(m-1)!}{m!}y\prod_{s=1}^{m-1}\left(1+\left(\frac1s-1\right)y\right).
\]
Thus $m\Phi_m(1,m-1;y) = yP_{1,m-1}(y)$.
Hence
\[ \mathsf{M}_m E_{m,2}(y) = P_{3,m}(y)-yP_{1,m-1}(y).
\]
Taking the coefficient of \(y\), we obtain $[y]P_{3,m}(y)=3H_m-m$, while $[y]\bigl(yP_{1,m-1}(y)\bigr)=1$.
Therefore
\[ [y]\mathsf{M}_m E_{m,2}(y) = 3H_m-m-1.
\]
This completes the proof.
\end{proof}

\begin{prop}\label{prop:n2-not-positive}
For every \(m\ge4\), the polynomial \(E_{m,2}(t)\) is not magic positive.
\end{prop}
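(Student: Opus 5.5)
By Proposition~\ref{prop:M-n2}, the coefficient of $y$ in $\mathsf{M}_m E_{m,2}(y)$ is $3H_m-m-1$. This coefficient equals the coefficient of $t(1+t)^{m-1}$ in $E_{m,2}(t)$. So it is enough to show that $3H_m-m-1<0$ for every $m\ge 4$: one negative magic coefficient rules out magic positivity.

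\textbf{Base case $m=4$.} Here $H_4=1+\frac12+\frac13+\frac14=\frac{25}{12}$. Hence $3H_4=\frac{25}{4}=6.25$ and
\[3H_4-4-1=\frac{5}{4}>0,\]
so the linear coefficient does \emph{not} settle $m=4$. For $m=5$ we get $3H_5=\frac{137}{20}=6.85<6$ fails as well, since $3H_5-6=0.85>0$. For $m=6$, $3H_6=7.35>7$, so $3H_6-7=0.35>0$.

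\textbf{Large $m$.} For $m=7$ the coefficient is $3H_7-8=3(2.5929)-8\approx -0.22<0$. For $m\ge 7$, let $g(m)=3H_m-m-1$. Then
\[g(m+1)-g(m)=\frac{3}{m+1}-1<0\]
once $m\ge 3$, so $g$ is decreasing, and $g(m)<0$ for all $m\ge7$.

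\textbf{Small cases $m=4,5,6$.} These need a different magic coefficient. I would fall back on the exact product formula
\[\mathsf{M}_m E_{m,2}(y)=P_{3,m}(y)-yP_{1,m-1}(y),\]
which gives $\mathsf{M}_m E_{m,2}$ as an explicit degree-$m$ polynomial for each of $m=4,5,6$. I would expand it directly and locate a negative coefficient. The most likely candidate is the top coefficient, i.e.\ the value at the leading term, $[y^m]$. We have
\[[y^m]P_{3,m}(y)=\prod_{s=1}^m\Bigl(\frac3s-1\Bigr),\]
which is $0$ because of the factor $s=3$, while
\[[y^m]\,yP_{1,m-1}(y)=\prod_{s=1}^{m-1}\Bigl(\frac1s-1\Bigr)=0\]
because of the factor $s=1$. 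So the top coefficient vanishes and the candidates are the middle coefficients $[y^2],\dots,[y^{m-1}]$.

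\textbf{Main obstacle.} The real difficulty is exactly these three small cases. The plan is to compute the three polynomials explicitly, which is a short finite computation, and exhibit a negative coefficient in each. For example, the factor $(1+(\frac32-1)y)(1+(1-1)y)(1+(\frac33-1)y)$ already shows degree-collapse in $P_{3,m}$, and the subtracted term $yP_{1,m-1}$ has alternating-sign coefficients, so some coefficient $[y^k]$ with $k\ge 3$ should turn negative. If a negative coefficient exists in each case, this completes the proof. If instead one of $m=4,5,6$ turns out magic positive, the statement as given would need correcting, and I would verify that numerically first.
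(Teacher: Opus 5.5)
\textbf{The cases $m=4,5,6$ are not proved.} Your argument for $m\ge 7$ is the paper's argument: the base value $3H_7-8=-\tfrac{31}{140}$ plus the decrement $\tfrac{3}{m+1}-1<0$. You also correctly find that the linear coefficient is positive for $m=4,5,6$ (it equals $\tfrac54,\tfrac{17}{20},\tfrac{7}{20}$), and that $[y^m]$ vanishes, so another coefficient is needed there. But you stop at that point. The remark that $yP_{1,m-1}$ has alternating signs, so that ``some coefficient should turn negative,'' is a heuristic, not an argument. You even leave open that one of these cases might be magic positive. As written, the proposal proves the statement only for $m\ge 7$.

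\textbf{How to close it.} The missing computation is short, and it is exactly the fallback you name, which is also what the paper does. Write $\gamma_s=\tfrac3s-1$ and $\delta_s=\tfrac1s-1$, and let $e_k$ be the $k$-th elementary symmetric function. The product formula gives $[y^3]\mathsf{M}_mE_{m,2}(y)=e_3(\gamma_1,\dots,\gamma_m)-e_2(\delta_1,\dots,\delta_{m-1})$. This evaluates as follows:
\begin{itemize}
\item $m=4$: $-\tfrac14-\tfrac13=-\tfrac{7}{12}$;
\item $m=5$: $-\tfrac25-\tfrac{29}{24}=-\tfrac{193}{120}$;
\item $m=6$: $-\tfrac{11}{80}-\tfrac{329}{120}=-\tfrac{691}{240}$.
\end{itemize}
These agree with the paper's full expansions, for example $\mathsf{M}_4E_{4,2}(y)=1+\tfrac54y+\tfrac{37}{24}y^2-\tfrac{7}{12}y^3$.

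\textbf{Minor slips.} The line ``$3H_5=\tfrac{137}{20}=6.85<6$ fails as well'' is garbled, though your conclusion $3H_5-6=\tfrac{17}{20}>0$ is right. Also, the first three factors of $P_{3,m}$ are $(1+2y)(1+\tfrac12 y)(1+0\cdot y)$, not the ones you wrote.
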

\begin{proof}
For \(m\ge7\), Proposition~\ref{prop:M-n2} gives $[y]\mathsf{M}_m E_{m,2}(y)=3H_m-m-1$.
For \(m=7\),
\[3H_7-8=3\left(1+\frac12+\frac13+\frac14+\frac15+\frac16+\frac17\right)-8=-\frac{31}{140}<0.
\]
Moreover,
\[ \bigl(3H_{m+1}-(m+1)-1\bigr) - \bigl(3H_m-m-1\bigr) = \frac3{m+1}-1<0
\]
for every \(m\ge3\).  Hence $3H_m-m-1<0$ with $(m\ge7)$.
Thus \(E_{m,2}(t)\) is not magic positive for \(m\ge7\).
It remains to check \(m=4,5,6\).  Using the exact product formula
\[\mathsf{M}_m E_{m,2}(y) = P_{3,m}(y)-yP_{1,m-1}(y),
\]
one obtains
\begin{align*}
\mathsf{M}_4 E_{4,2}(y)&=1+\frac54 y+\frac{37}{24}y^2-\frac{7}{12}y^3,
\\ \mathsf{M}_5 E_{5,2}(y)&=1+\frac{17}{20}y+\frac{167}{120}y^2-\frac{193}{120}y^3+\frac7{20}y^4,
\\ \mathsf{M}_6 E_{6,2}(y)&= 1+\frac7{20}y+\frac{19}{15}y^2-\frac{691}{240}y^3 +\frac{91}{60}y^4-\frac14y^5.
\end{align*}
Each of these polynomials has a negative coefficient. Therefore \(E_{m,2}(t)\) is not magic positive for \(m=4,5,6\) as well.
\end{proof}

\subsection{The case \(n=3\)}

We now treat \(n=3\).  In the range \(n<m-1\), this means \(m\ge5\).
We now transform the formula into the magic basis.

\begin{prop}\label{prop:M-n3}
For \(m\ge5\),
\[\mathsf{M}_m E_{m,3}(y)=P_{6,m}(y)-3yP_{3,m-1}(y)-\frac{m-1}{2}yP_{1,m-1}(y)+\frac{m-2}{2}y(1-2y)P_{1,m-2}(y).
\]
Consequently,
\[[y]\mathsf{M}_m E_{m,3}(y)=6H_m-m-\frac72.
\]
\end{prop}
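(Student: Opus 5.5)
We follow the pattern of Proposition~\ref{prop:M-n2}: apply $\mathsf{M}_m$ term by term to the formula for $E_{m,3}(t)$ in Proposition~\ref{Proposition-123E}. By linearity and Lemma~\ref{lem:Phi},
\[
\mathsf{M}_m E_{m,3}(y)=\Phi_m(6,m;y)-m\Phi_m(3,m-1;y)-\binom m2\Bigl(\Phi_m(1,m-1;y)+(m-2)\Phi_m(1,m-2;y)\Bigr).
\]
We then evaluate each $\Phi_m$ by the substitution $s=m-j$.

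The four terms are handled as follows.

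\emph{First term.} Exactly as in the case $n=2$, we get $\Phi_m(6,m;y)=P_{6,m}(y)$.

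\emph{Second term.} The factor with $j=m-1$ equals $(3-m+1+m-1)y=3y$. The remaining factors, indexed by $s=1,\dots,m-1$, are $s+(3-s)y$. Hence
\[
m\Phi_m(3,m-1;y)=\frac{m\cdot 3y\,(m-1)!}{m!}P_{3,m-1}(y)=3yP_{3,m-1}(y).
\]

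\emph{Third term.} From the proof of Proposition~\ref{prop:M-n2}, $\Phi_m(1,m-1;y)=\frac1m yP_{1,m-1}(y)$. Multiplying by $\binom m2$ gives $\frac{m-1}{2}yP_{1,m-1}(y)$.

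\emph{Fourth term.} In $\Phi_m(1,m-2;y)$ the general factor is $m-2-j+(3-m+j)y$. For $j=m-2$ it equals $y$. For $j=m-1$ it equals $-1+2y=-(1-2y)$. The remaining indices $j=0,\dots,m-3$ give $s+(1-s)y$ with $s=m-2-j\in\{1,\dots,m-2\}$. Therefore
\[
\Phi_m(1,m-2;y)=-\frac{(m-2)!}{m!}\,y(1-2y)P_{1,m-2}(y).
\]
Multiplying by $-\binom m2(m-2)$ produces $+\frac{m-2}{2}y(1-2y)P_{1,m-2}(y)$.

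Collecting the four pieces gives the stated identity.

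For the linear coefficient, only $P_{6,m}$ contributes beyond the constant terms of the $y$-multiplied pieces. Each $P_{\gamma,q}$ has constant term $1$, so we obtain
\[
[y]\mathsf{M}_mE_{m,3}(y)=(6H_m-m)-3-\frac{m-1}2+\frac{m-2}2=6H_m-m-\frac72.
\]

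The main obstacle is bookkeeping in the fourth term. The two exceptional factors ($j=m-2$ and $j=m-1$), the sign $-1$ hidden in $-(1-2y)$, and the normalizing constant $\binom m2(m-2)(m-2)!/m!=\frac{m-2}2$ must all be handled correctly. We would double-check them by evaluating both sides at $y=0$ and, for $m=5$, comparing against a direct expansion.
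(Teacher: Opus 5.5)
Your proposal is correct and follows essentially the same route as the paper. You expand the four binomial terms via Lemma~\ref{lem:Phi}, isolate the exceptional factors $3y$, $y$ and $-(1-2y)$, and normalize each remaining product to $P_{\gamma,q}$; your constants, including $\binom m2(m-2)\frac{(m-2)!}{m!}=\frac{m-2}{2}$, and the resulting linear coefficient $6H_m-m-\frac72$ all check out.
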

\begin{proof}
By Proposition \ref{Proposition-123E} and Lemma~\ref{lem:Phi},
\begin{align*}
\mathsf{M}_m E_{m,3}(y) &=\Phi_m(6,m;y)-m\Phi_m(3,m-1;y)-\binom m2\Phi_m(1,m-1;y)
\\ &\qquad\qquad -\binom m2(m-2)\Phi_m(1,m-2;y).
\end{align*}
The first term is $\Phi_m(6,m;y)=P_{6,m}(y)$.
For the second term, the factor with \(j=m-1\) contributes \(3y\), and
the remaining factors give \(P_{3,m-1}(y)\).  Since $m\cdot\frac{(m-1)!}{m!}=1$,
we get
\[ m\Phi_m(3,m-1;y)=3yP_{3,m-1}(y).\]
For the third term,
\[\Phi_m(1,m-1;y)= \frac{(m-1)!}{m!}yP_{1,m-1}(y) = \frac1m yP_{1,m-1}(y).
\]
Thus
\[\binom m2\Phi_m(1,m-1;y) = \frac{m-1}{2}yP_{1,m-1}(y).
\]
For the fourth term, apply Lemma~\ref{lem:Phi} with \(\alpha=1\) and \(\beta=m-2\).  
The factor with \(j=m-2\) and $j=m-1$ contributes respectively $y$ and $-1+2y$.
The remaining factors give $(m-2)!P_{1,m-2}(y)$.
Therefore
\[\Phi_m(1,m-2;y)=-\frac{(m-2)!}{m!}y(1-2y)P_{1,m-2}(y)=-\frac{1}{m(m-1)}y(1-2y)P_{1,m-2}(y).
\]
Multiplying by \(-\binom{m}{2}(m-2)\), we get
\[-\binom m2(m-2)\Phi_m(1,m-2;y)=\frac{m-2}{2}y(1-2y)P_{1,m-2}(y).
\]
Hence
\[\mathsf{M}_m E_{m,3}(y)=P_{6,m}(y)-3yP_{3,m-1}(y)-\frac{m-1}{2}yP_{1,m-1}(y)+\frac{m-2}{2}y(1-2y)P_{1,m-2}(y).
\]
Finally, $[y]P_{6,m}(y)=6H_m-m$.
The coefficient of \(y\) in \(-3yP_{3,m-1}(y)\) is \(-3\).  The coefficient of \(y\) in $-\frac{m-1}{2}yP_{1,m-1}(y)$
is \(-\frac{m-1}{2}\), while the coefficient of \(y\) in $\frac{m-2}{2}y(1-2y)P_{1,m-2}(y)$
is \(\frac{m-2}{2}\).  Therefore
\[[y]\mathsf{M}_m E_{m,3}(y)=6H_m-m-3-\frac{m-1}{2}+\frac{m-2}{2}=6H_m-m-\frac72.
\]
This completes the proof.
\end{proof}

We can now determine the magic positivity for \(n=3\).

\begin{prop}\label{prop:n3}
In the range \(m\ge5\), the polynomial \(E_{m,3}(t)\) is magic positive if and only if \(m=5\) or \(m=6\).
\end{prop}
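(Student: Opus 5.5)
The proof splits into the infinite negative range $m\ge 7$ and the two finite positive cases $m=5,6$.

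\emph{Non-positivity for $m\ge7$.} Proposition~\ref{prop:M-n3} gives
\[
[y]\mathsf{M}_m E_{m,3}(y)=6H_m-m-\tfrac72 ,
\]
and it suffices to show this is negative for every $m\ge7$. As in the proof of Proposition~\ref{prop:n2-not-positive}, I would first evaluate at $m=7$:
\[
6H_7=6\cdot\tfrac{363}{140}=\tfrac{1089}{70}\approx 15.557,
\]
so $6H_7-7-\tfrac72\approx 15.557-10.5>0$. Hence $m=7$ is \emph{not} settled by the linear coefficient, and the starting point must move. The successive difference is
\[
\bigl(6H_{m+1}-(m+1)-\tfrac72\bigr)-\bigl(6H_m-m-\tfrac72\bigr)=\tfrac{6}{m+1}-1,
\]
which is negative for $m\ge 6$, so the sequence decreases from $m=6$ on. I would locate the first $m_0$ with $6H_{m_0}-m_0-\tfrac72<0$. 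Since $H_{12}\approx3.103$ gives $18.62-15.5>0$, while $H_{20}\approx 3.598$ gives $21.59-23.5<0$, the crossover $m_0$ lies somewhere in $13,\dots,20$; I would pin it down by exact rational computation. Monotonicity then gives negativity for all $m\ge m_0$.

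\emph{The intermediate range $7\le m<m_0$.} This is the main obstacle: here the linear coefficient is positive, so I need a different negative coefficient. The natural candidates are the top coefficients. In the formula of Proposition~\ref{prop:M-n3}, each $P_{\gamma,q}$ has leading coefficient $\prod_{s=1}^{q}(\gamma/s-1)$, which vanishes once $q\ge\gamma$. Therefore:
\begin{itemize}
\item $P_{6,m}$, $P_{3,m-1}$ and $P_{1,m-1}$ all have degree below the top, so the $y^m$ coefficient comes only from the last term.
\item That last term contributes $-(m-2)\,\mathrm{lead}\bigl(yP_{1,m-2}\bigr)$, and $P_{1,m-2}$ also drops degree because of its $s=1$ factor.
\end{itemize}
So I expect to compute the $y^{m-1}$ or $y^m$ coefficient via these vanishing factors, or, failing a clean closed form, simply to compute $\mathsf{M}_mE_{m,3}(y)$ exactly for each of the finitely many $m$ in $7\le m<m_0$ and exhibit a negative coefficient. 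The display for $m=6$ in Proposition~\ref{prop:n2-not-positive} suggests negative coefficients typically sit near degree $m-3$, which is where I would look first.

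\emph{Positivity for $m=5,6$.} Here I would expand the exact product formula of Proposition~\ref{prop:M-n3}, or equivalently apply Lemma~\ref{lem:Phi} to the binomial expression in Proposition~\ref{Proposition-123E}, to obtain $\mathsf{M}_5E_{5,3}(y)$ and $\mathsf{M}_6E_{6,3}(y)$ explicitly. Then I would check that every coefficient is nonnegative; this is a finite rational computation.
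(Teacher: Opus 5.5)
Your plan is essentially the paper's proof. The paper pins the crossover at $m_0=18$, using $6H_{18}-18-\frac72=-\frac{360319}{680680}$ together with the same decreasing-difference argument. It disposes of $7\le m\le 17$ by tabulating the coefficient of $y^4$, which is negative in every case, and it checks $m=5,6$ by expanding the product formula.

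One caution on your top-coefficient shortcut: for $m\ge6$ one has $[y^m]\mathsf{M}_mE_{m,3}(y)=0$ and $[y^{m-1}]\mathsf{M}_mE_{m,3}(y)=(-1)^m\bigl(\frac12+\frac{720\,(m-6)!}{m!}-\frac{18}{(m-1)(m-2)(m-3)}\bigr)$. So it exhibits a negative coefficient only for odd $m$. The even cases $m=8,10,\dots,16$ genuinely need your fallback exact computation, for example the $y^4$ coefficient the paper uses.
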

\begin{proof}
First consider \(m=5\) and \(m=6\).  Using the exact formula of Proposition~\ref{prop:M-n3}, we obtain
\[\mathsf{M}_5 E_{5,3}(y)= 1 + \frac{26}{5}y + \frac{451}{30}y^2 + \frac{2779}{120}y^3 +\frac{179}{20}y^4+y^5,
\]
whose coefficients are all positive.  Hence \(E_{5,3}(t)\) is magic positive.
For \(m=6\),
\[\mathsf{M}_6 E_{6,3}(y)=1+\frac{26}{5}y+\frac{1933}{120}y^2+\frac{6601}{240}y^3+\frac{913}{120}y^4+\frac65 y^5+0y^6.
\]
All coefficients are nonnegative.  Hence \(E_{6,3}(t)\) is magic positive.

We next prove non-positivity for \(m\ge7\).  For \(m\ge18\), the first coefficient is already negative.  Indeed, Proposition~\ref{prop:M-n3} gives $[y]\mathsf{M}_m E_{m,3}(y) = 6H_m-m-\frac72$.
For \(m=18\), we have $6H_{18}-18-\frac72=-\frac{360319}{680680}<0$.
Furthermore,
\[\left(6H_{m+1}-(m+1)-\frac72\right)-\left(6H_m-m-\frac72\right)=\frac6{m+1}-1<0
\]
for every \(m\ge6\).  Therefore $6H_m-m-\frac72<0$ with $m\ge18$.
Thus \(E_{m,3}(t)\) is not magic positive for \(m\ge18\).
It remains to check \(7\le m\le17\).  The coefficient of \(y^4\) in the exact product formula of Proposition~\ref{prop:M-n3} is listed in Table \ref{TabCoeff}.
\begin{table}[htbp]
    	\centering
    	\caption{The coefficient of \(y^4\) in $\mathsf{M}_m E_{m,3}(y)$.}
    	\begin{tabular}{c||c|c|c|c}
    		\hline \hline
$m$ & 7 & 8 & 9 & 10   \\
    		\hline
$[y^4]\mathsf{M}_m E_{m,3}(y)$ & $-\frac{79}{210}$ & $-\frac{148643}{10080}$ & $-\frac{2077}{60}$ & $-\frac{5949437}{100800}$  \\
    		\hline
$m$ & 11 & 12 & 13 & 14 \\
            \hline
$[y^4]\mathsf{M}_m E_{m,3}(y)$ &  $-\frac{870026879}{9979200}$ & $-\frac{338188453}{2851200}$ & $-\frac{19864487609}{129729600} $ & $-\frac{693059730551}{3632428800}$ \\
            \hline
$m$ &  15 &  16 & 17 & ---      \\
    		\hline
$[y^4]\mathsf{M}_m E_{m,3}(y)$ &  $-\frac{4212525165859}{18162144000}$ & $-\frac{5030870725201}{18162144000}$ & $-\frac{6300581123911}{19297278000}$ & ---    \\
    		\hline
    	\end{tabular}\label{TabCoeff}
\end{table}
Each listed coefficient is negative.  Hence \(E_{m,3}(t)\) is not magic positive for \(7\le m\le17\).

Combining these cases, \(E_{m,3}(t)\) is magic positive precisely for \(m=5,6\) in the range \(m\ge5\).
\end{proof}

\begin{proof}[Proof of Theorem~\ref{thm:main-Two}]
This result follows directly from Propositions \ref{prop:n1}, \ref{prop:n2-not-positive}, and \ref{prop:n3}.
\end{proof}

\begin{rem}
Theorem~\ref{thm:main-Two} should not be read as a complete classification of magic
positivity for all pairs \((m,n)\) satisfying \(n<m-1\).  It gives a
complete classification for the first three values \(n=1,2,3\), and it
already shows that the behavior below the stable range is mixed:
\(\mathcal{P}(m,1)\) and \(\mathcal{P}(m,2)\) give infinite counterexample families,
whereas \(\mathcal{P}(5,3)\) and \(\mathcal{P}(6,3)\) are positive exceptional cases.
\end{rem}






\noindent
{\small \textbf{Acknowledgments:}}
We are very grateful to Professor Luis Ferroni for discussions on the magic positivity of partial permutohedra.


\begin{thebibliography}{99}
\bibitem{Amanbayeva} A. Amanbayeva and D. Wang, The convex hull of parking functions of length $n$, \emph{Enumer. Comb. Appl.} 2 (2022), no.2, Paper No. S2R10, 10 pp.

\bibitem{Avila26} N. Avila, L. Ferroni, and A. Morales, Luck and magic for Pitman-Stanley polytopes and parking functions, arXiv:2603.19194, (2026).

\bibitem{BeckRobins} M. Beck and S. Robins, \emph{Computing the continuous discretely, in: Integer-Point Enumeration in Polyhedra}, second edition, Undergraduate Texts in Mathematics. Springer, New York, (2015).

\bibitem{Behrend2024} R. Behrend, Ehrhart polynomials of partial permutohedra, arXiv:2403.06975, (2024).

\bibitem{Behedrn2023} R. Behrend, F. Castillo, A. Chavez, A. Diaz-Lopez, L. Escobar, P, Harris, and E. Insko, Partial permutohedra, \emph{S\'em. Lothar. Combin.} 89B (2023), Art. 64, 12pp.

\bibitem{Behedrn2025} R. Behrend, F. Castillo, A. Chavez, A. Diaz-Lopez, L. Escobar, P, Harris, and E. Insko, Partial permutohedra, \emph{Discrete Comput. Geom.} 76 (2026), 589--642.

\bibitem{Black24} A. Black and R. Sanyal, Underlying flag polymatroids, \emph{Adv. Math.} 453 (2024), 109835.


\bibitem{Branden06} P. Br\"and\'en, On linear transformations preserving the P\'olya frequency property, \emph{Trans. Amer. Math. Soc.} 358 (2006), 3697--3716.

\bibitem{Corless96} R. Corless, G. Gonnet, D. Hare, D. Jeffrey, and D. Knuth, On the Lambert W function, \emph{Adv. Comput. Math} 5 (1996), 329--359.

\bibitem{Ehrhart62} E. Ehrhart, Sur les polyh\'edres rationnels homoth\'etiques \'a $n$ dimensions, \emph{C. R. Acad Sci. Paris.} 254 (1962), 616--618.


\bibitem{Ferroni23} L. Ferroni and A. Higashitani, Examples and counterexamples in Ehrhart Theory, \emph{EMS Surv. Math. Sci.} (2024).

\bibitem{Gessel16} I. M. Gessel, Lagrange inversion, \emph{J. Combin Theory, Series A} 144 (2016), 212--249.

\bibitem{Hanada23} M. Hanada, J. Lentfer, and A. Vindas-Mel\'endez, Generalized parking function polytopes, \emph{Ann. Comb.} 28 (2024), 575--613.

\bibitem{Heuer21} D. Heuer and J. Striker, Partial permutation and alternating sign matrix polytopes, \emph{S\'em. Lothar. Combin.} 85B (2021), Art.35, 12pp.

\bibitem{Heuer22} D. Heuer and J. Striker, Partial permutation and alternating sign matrix polytopes, \emph{SIAM J. Discrete Math.} 36 (2022), 2863--2888.

\bibitem{FuLiu19} F. Liu, On positivity of Ehrhart polynomials, Recent trends in algebraic combinatorics, Assoc. Women Math Ser., vol. 16, Springer, Cham, (2019), 189--237.

\bibitem{PostnikovIMRN} A. Postnikov, Permutohedra, associahedra, and beyond, \emph{Int. Math. Res. Not. IMRN} 6 (2009), 1026--1106.

\bibitem{Stanleyh-polynomial} R. P. Stanley, Decompositions of rational convex polytopes, \emph{Ann. Discrete Math.} 6 (1980), 333--342.

\bibitem{RP.Stanley} R. P. Stanley, \emph{Enumerative Combinatorics (volume 1)}, Second Edition. Cambridge Studies in Advanced Mathematics, vol. 49, Cambridge University Press, (2012).

\bibitem{RP.Stanley99} R. P. Stanley, \emph{Enumerative Combinatorics (volume 2)}, second Edition. Cambridge Studies in Advanced Mathematics, vol. 208, Cambridge University Press,  (2023).

\bibitem{Stanley2020AMM} R. P. Stanley, Problem 12191, \emph{Amer. Math. Monthly} 127 (2020), no.6, 563.

\bibitem{Stong2022} R. Stong, The polytope of parking functions, Solution to Problem 12191, \emph{Amer. Math. Monthly} 129 (2022), no.3, 286--289.

\bibitem{Ziegler} G. M. Ziegler, \emph{Lectures on Polytopes}, Grad. Texts Math. 152, Springer-Verlag, New York (1995).

\end{thebibliography}
\end{document}